\numberwithin{equation}{section}
\theoremstyle{plain}
\newtheorem{theorem}{Theorem}[section]
\newtheorem{lemma}[theorem]{Lemma}
\newtheorem{proposition}[theorem]{Proposition}
\theoremstyle{definition}
\newtheorem{definition}[theorem]{Definition}
\newtheorem{remark}[theorem]{Remark}
\title[]{Weak-strong uniqueness and high-friction limit \\ for Euler-Riesz systems}
\author[Nuno J. Alves, Jos\'{e} A. Carrillo \& Young-Pil Choi]{}
\address[Nuno J. Alves]{University of Vienna, Faculty of Mathematics, Oskar-Morgenstern-Platz 1, 1090 Vienna, Austria.}
\email{nuno.januario.alves@univie.ac.at}
\address[Jos\'{e} A. Carrillo]{University of Oxford, Mathematical Institute, Oxford, OX2 6GG, United Kingdom.}
\email{carrillo@maths.ox.ac.uk}
\address[Young-Pil Choi]{Yonsei University, Department of Mathematics, Seoul, 03722, Republic of Korea}
\email{ypchoi@yonsei.ac.kr}
\keywords{}
\subjclass[2020]{35Q31}
\begin{document}
\maketitle

\centerline{\scshape Nuno J. Alves, Jos\'{e} A. Carrillo \& Young-Pil Choi}
\medskip

\date{\today}

\begin{abstract}
In this work we employ the relative energy method to obtain a weak-strong uniqueness principle for a Euler-Riesz system, as well as to establish its convergence in the high-friction limit towards a gradient flow equation. The main technical challenge in our analysis is addressed using a specific case of a Hardy-Littlewood-Sobolev inequality for Riesz potentials.
\end{abstract}

\maketitle

\baselineskip 18pt

\section{Introduction}
In this work we consider the following Euler-Riesz system in $\mathopen{]}0,T\mathclose{[}  \times \Omega$: 
\begin{equation} \label{EW}
 \begin{dcases}
  \partial_t \rho + \nabla \cdot (\rho u) = 0, \\
  \partial_t (\rho u) + \nabla \cdot (\rho u \otimes u) + \nabla \rho^\gamma + \kappa \rho \nabla W* \rho = - \nu \rho u,\\
 \end{dcases}
\end{equation} 
where $0 < T < \infty$ and $\Omega$ is either the $d$-dimensional torus $\mathbb{T}^d$ or a smooth bounded domain of $\mathbb{R}^d$, $d \geq 1$. In the case of a bounded domain, we consider the following no-flux boundary conditions:
\begin{equation} \label{BCEW}
u \cdot n = 0 \quad \text{on} \ \mathopen{[}0, T\mathclose{[} \times \partial \Omega
\end{equation}
where $n$ is any outward normal vector to the boundary of $ \Omega.$ The density and linear velocity are denoted by $\rho$  and $u,$ respectively, $\rho^\gamma= p(\rho)$ is the pressure, and the interaction kernel $W$ is given by 
\[
W(x) = \pm \frac{|x|^\alpha}{\alpha}, \quad -d < \alpha < 0.
\]
The parameters $\gamma > 1, \ \kappa > 0,$  $\nu \geq 0$ are the adiabatic exponent, the interaction strength, and the collision frequency, respectively. This system models a single-species fluid subject to attractive/repulsive interaction forces depending on the sign $\pm$.
\par
The goal of this work is twofold. First, one obtains a weak-strong uniqueness property for system $(\ref{EW})$, and then one establishes the high-friction limit ($\nu \to \infty$) of system $(\ref{EW})$ towards the following diffusion-aggregation equation:
\begin{equation} \label{GF}
\partial_t \rho = \nabla \cdot \big(\nabla \rho^\gamma + \kappa \rho\nabla W* \rho  \big) \quad \text{in} \ \mathopen{]}0,T\mathclose{[}  \times \Omega,
\end{equation}
with the boundary conditions (if $\Omega$ is a bounded domain),
\begin{equation} \label{BCGF}
\big(\nabla \rho^\gamma + \kappa \rho\nabla W* \rho  \big) \cdot n = 0  \quad \text{on} \ \mathopen{[}0, T\mathclose{[} \times \partial \Omega.
\end{equation}
These equations find applications in mathematical biology or plasma physics, for instance in the modelling of the behaviour of cell populations as adhesion or chemotaxis, or in the modelling of charged particles subject to electric forces, see \cite{calvez,carrillo, carrillochoi} and references therein. \par 
Both results are obtained using the relative energy method. In both cases, the necessary estimates are essentially identical. This 
serves as a clear illustration of the close mathematical relationship between weak-strong uniqueness principles and relaxation phenomena.   \par 
The relative energy method is an efficient methodology for achieving stability results, including weak-strong uniqueness principles, and establishing asymptotic limits. Since its origin, \textit{e.g.} \cite{dafermos}, this method has seen an extensive applicability to diffusive relaxation \cite{alves, bianchini, carrillorelativeentropy, lattanzio, lattanziogas}.  \par
In the present manuscript, we consider weak and strong solutions for (\ref{EW}) and strong solutions for (\ref{GF}). Equation (\ref{GF}) can be regarded as a gradient flow in the space of probability measures endowed with a Wasserstein distance \cite{carrilloGF1,carrilloGF}. In fact, equation (\ref{GF}) can be written as 
\begin{equation*}
 \begin{dcases}
  \partial_t \rho + \nabla \cdot (\rho u) = 0, \\
 \rho u= -\nabla \rho^\gamma - \kappa \rho \nabla W* \rho .\\
 \end{dcases}
\end{equation*} 
As a consequence, the velocity field is given by $u=-\nabla\frac{\delta \mathcal{E}}{\delta \rho}$ with the free energy functional $\mathcal{E}(\rho)$ defined as
\[
\mathcal{E}(\rho) = 
\int_\Omega  \tfrac{1}{\gamma-1} \rho^{\gamma} + \kappa \tfrac{1}{2} \rho (W * \rho) \, dx
\]
over probability densities and its variation computed with zero mass perturbations. Within that framework, the emergence of a gradient flow type equation as the high-friction limit of its Euler counterpart has also been studied \cite{carrillochoi}. The results presented here extend the ones obtained in \cite{alvesrole} and \cite{lattanziogas} to what concerns the weak-strong uniqueness principle and the high-friction limit, respectively. In the former, one establishes the weak-strong uniqueness principle for a Euler-Poisson system in the whole space, taking as solution of the Poisson equation the convolution of the density with the Newtonian kernel. The same result can be obtained in the bounded setting considering the Neumann function instead, which essentially corresponds to the present case with $\alpha = 2 - d$ and $d \geq 3$. In \cite{lattanziogas}, the authors establish the high-friction limit of a Euler-Poisson system towards a Keller-Segel system in a periodic setting. In this case, the high-friction limit is established for $\gamma \geq 2 - \frac{2}{d}$.  This range for the adiabatic exponent is also valid for a weak-strong uniqueness principle or high-friction limit of a Euler-Poisson system in a bounded domain, where the Neumann function is put together with the regularity theory employed in \cite{lattanziogas}. In those cases, using the Poisson equation one derives integration by parts formulas that prove very useful in obtaining the necessary estimates to reach the desired stability results. In the present case, those integration by parts formulas do not apply. Moreover, the same range for the adiabatic exponent was obtained in \cite{carrillorelativeentropy} where the authors establish the high-friction limit of a Euler system with a bounded interaction kernel. \par 
The high-friction limit of a pressureless Euler-Riesz system towards an aggregation equation is studied in \cite{choi0, choi2}. In the pressureless case, the relative interaction energy cannot be controlled by a pressure, and therefore should be controlled by either itself or by the relative kinetic energy. In \cite{choi0, choi2}, the Wasserstein distance combined with the relative kinetic energy is employed to handle the nonlocal interaction term when it is regular. On the other hand, the interaction potential $W$ is given by the Coulombic or super-Coulombic interaction and the relative interaction energy is used to bound the nonlocal terms. See also \cite{choi3, NRS22} for estimates of relative interaction energies with Riesz potentials. Additionally, theories on well-posedness and existence of solutions for Euler-Riesz systems can be found in \cite{choi1, danchin}.    \par 
The results obtained here are accomplished by means of the relative energy for (\ref{EW}), which is the quadratic part of the Taylor expansion of the energy functional, see Section \ref{section_energy}. For the weak-strong uniqueness principle, we compare weak and strong solutions of (\ref{EW}), while for the high-friction limit we compare a strong solution of $(\ref{GF})$ with a weak solution of (\ref{EW}) in the limit $\nu \to \infty$. The first step is to obtain an inequality satisfied by the relative energy between the two considered solutions. The terms on the right-hand-side of that inequality are then bounded using the relative energy. Among these estimates, the one deserving special attention is the one containing the interaction kernel, which is dealt with using a particular case of a Hardy-Littlewood-Sobolev inequality.

\par 
As a summary, we are able to prove our main results -- weak-strong uniqueness and high-friction limit -- in the ranges:
\[ \gamma \geq 2-\frac{\alpha+d}{d}, \quad  1-d < \alpha < 0, \quad d > 1.  \]
 
\par 
The weak-strong uniqueness result in stated in Section \ref{section_main_results_WSU} and it is proved in Section \ref{section_proofs_WSU}. The result concerning the high-friction limit is stated in Section \ref{section_main_results_RL} and its proof is sketched in Section \ref{section_proofs_RL}. Moreover, we have to restrict the admissible values of the interaction strength to ensure that the relative energy is non-negative, otherwise the stability results could not be obtained, see Section \ref{section_proofs_nonnegativity}. 

\begin{remark}
The results obtained in this manuscript may be understood as formal results as one considers notions of solutions whose existence theories are not yet available. Nonetheless, the weak solutions considered here are reasonable to what concerns the a priori estimates of system (\ref{EW}), and the strong solutions can be seen as classical solutions with additional boundedness assumptions. 
\end{remark}

\section{Energy and Relative Energy Identities} \label{section_energy}

In this section we formally derive the energy and relative energy identities for system \eqref{EW}. First, we introduce the internal energy function $h(\rho) = \frac{1}{\gamma - 1} \rho^\gamma,$ which is connected to the pressure $p(\rho) = \rho^\gamma$ via the relation
$\rho h^{\prime\prime}(\rho) = p^\prime(\rho)$, with $\rho > 0$. Moreover, the potential energy functional $\mathcal{E} = \mathcal{E}(\rho)$ associated with system \eqref{EW} is given by
\[
\mathcal{E}(\rho) = \int_\Omega  h(\rho) + \kappa \tfrac{1}{2} \rho (W * \rho)  \, dx.
\]

Using the symmetry of $W,$ one computes the functional derivative $\frac{\delta \mathcal{E}}{\delta \rho},$ which is given by
\[
\frac{\delta \mathcal{E}}{\delta \rho}(\rho) = h^\prime(\rho) + \kappa (W * \rho).
\]

The relative potential energy functional associated with \eqref{EW} is then given by
\begin{equation*} \label{relpotential}
\begin{split} 
\mathcal{E}(\rho | \bar \rho) &= \mathcal{E}(\rho) - \mathcal{E}(\bar \rho) - \big\langle \frac{\delta \mathcal{E}}{\delta \rho}(\bar \rho), \rho - \bar \rho \big\rangle  \\
&= \int_{\Omega}  h(\rho | \bar \rho) + \kappa \tfrac{1}{2} (\rho - \bar \rho) (W * (\rho - \bar \rho))  \, dx,
\end{split}
\end{equation*}
where $h(\rho | \bar \rho) = h(\rho) - h(\bar \rho) - h^\prime(\bar \rho)(\rho - \bar \rho).$ \par 
Assume that $(\rho, u)$ is a smooth solution of (\ref{EW}) (satisfying (\ref{BCEW}) if $\Omega$ is a bounded domain). By taking the inner product of the second equation in (\ref{EW}) with $u$, integrating the resulting expression over space, and utilizing the symmetry of $W$, we obtain the energy identity for system (\ref{EW}):
\begin{equation*} \label{energyidentityEW1}
\frac{d}{dt} \int_\Omega \tfrac{1}{2} \rho |u|^2 + h(\rho) + \kappa \tfrac{1}{2} \rho (W * \rho)  \, dx = - \int_\Omega \nu \rho |u|^2 \, dx.
\end{equation*}
Observe that for $\nu=0$ the energy is conserved over time while for $\nu>0$ the energy is dissipated. \par 
The kinetic energy functional associated with system (\ref{EW}) is the functional $\mathcal{K}=\mathcal{K}(\rho, \rho u)$ given by 
\[\mathcal{K}(\rho, \rho u) = \int_\Omega \tfrac{1}{2} \rho |u|^2 \, dx. \]
After a straightforward derivation we obtain the following functional derivatives: 
\[
\frac{\delta \mathcal{K}}{\delta \rho}(\rho, \rho u) = -\tfrac{1}{2}|u|^2, \qquad \frac{\delta \mathcal{K}}{\delta (\rho u)}(\rho, \rho u) = u.
\]  
Using these derivatives we can calculate the relative kinetic energy functional associated with (\ref{EW}):
\begin{equation*} \label{relkinetic1}
   \begin{split}
    \mathcal{K} (\rho,  \rho u  |  \bar{\rho}, \bar{\rho} \bar{u})  & =  \ \mathcal{K}(\rho, \rho u) - \mathcal{K}(\bar{\rho}, \bar{\rho} \bar{u})  - \big< \frac{\delta \mathcal{K}}{\delta \rho}(\bar{\rho}, \bar{\rho} \bar{u}) , \rho - \bar{\rho}\big> - \big< \frac{\delta \mathcal{K}}{\delta (\rho u)}(\bar{\rho}, \bar{\rho} \bar{u}) , \rho u - \bar{\rho} \bar u\big> \\
    & =   \int_{\Omega} \tfrac{1}{2} \rho |u - \bar{u}|^2  \, dx.
    \end{split}
 \end{equation*} 
Now, we present the relative energy identity for system (\ref{EW}). Assuming that $(\bar \rho , \bar u)$ is another smooth solution of (\ref{EW}), by adding the relative kinetic and relative potential energies of (\ref{EW}), after taking the time derivative we obtain:
\begin{align*}
& \frac{d}{dt} \int_\Omega \tfrac{1}{2} \rho |u - \bar u|^2 + h(\rho| \bar \rho) + \kappa \tfrac{1}{2} (\rho - \bar \rho) \big(W * (\rho - \bar \rho) \big)  \, dx  + \int_\Omega \nu \rho |u - \bar u|^2 \, dx \\ 
 = &   - \int_\Omega \nabla \bar u : \rho (u - \bar u) \otimes (u - \bar u) \, dx \\
& - \int_\Omega (\nabla \cdot \bar u )p(\rho | \bar \rho) \ dx \\
& + \int_\Omega \kappa (\rho - \bar \rho) \bar u \cdot \nabla W * (\rho - \bar \rho) \, dx.
\end{align*}

\section{Main results} \label{section_main_results}
In this section we present the main results of this work -- a weak-strong uniqueness property for system 
(\ref{EW}), and its high-friction limit towards (\ref{GF}) as $\nu \to \infty$.  \par  
Before stating the results for each case, we provide the notions of solutions that will be employed in our analysis. Specifically, for the weak-strong uniqueness property we consider weak and strong solutions of system (\ref{EW}), while for the high-friction limit we focus on weak 
solutions of system (\ref{EW}) and strong solutions of system 
(\ref{GF}).

\subsection{Weak-strong uniqueness} \label{section_main_results_WSU}~\par 
\begin{definition}
 A pair $(\rho, u)$ with non-negative $\rho$ and such that
 \[\rho \in C\big([0,T[;  L^\gamma(\Omega) \big),\]
 \[\rho u \in C\big([0,T[;L^1(\Omega,\mathbb{R}^d)\big), \] 
 \[\rho |u|^2 \in  C\big([0,T[;L^1(\Omega)\big), \]
is a weak solution of (\ref{EW}) if:
\begin{enumerate}[(i)]
 \item $(\rho, u)$ satisfies a weak form of (\ref{EW}):
 \begin{equation*} 
         -\int_0^T \int_{\Omega} (\partial_t \varphi) \rho \, dxdt-\int_0^T \int_{\Omega} \nabla \varphi \cdot (\rho u) \, dxdt - \int_{\Omega} \varphi(0,x) \rho_0(x) \, dx=0,
\end{equation*} 
        \vspace{1mm}
 \begin{equation*} 
        \begin{split}
         -   \int_0^T & \int_{\Omega} \partial_t \tilde{\varphi} \cdot (\rho u) \, dxdt -  \int_0^T \int_{\Omega} \nabla \tilde{\varphi} : \rho u \otimes u \, dx dt - \int_{\Omega} \tilde{\varphi}(0,x) \cdot \rho_0(x) u_0(x) \, dx\\
         - & \int_0^T \int_{\Omega} (\nabla \cdot \tilde{\varphi}) \rho^\gamma \, dxdt   -  \int_0^T \int_{\Omega} \tilde{\varphi}\cdot (\kappa \rho \nabla W * \rho) \, dxdt = - \int_0^T \int_{\Omega} \nu \tilde{\varphi}\cdot (\rho u)\, dxdt,
        \end{split}
\end{equation*} 
for any Lipschitz test functions $\varphi: \mathopen{[}0,T\mathclose{[} \times \Omega \to \mathbb{R}$ and $ \tilde{\varphi}:\mathopen{[}0,T\mathclose{[} \times \Omega \to \mathbb{R}^d$ that have compact support in time, and in the case of a bounded domain, satisfy $\tilde{\varphi} \cdot n = 0$ on $\mathopen{[}0,T\mathclose{[} \times \partial \Omega,$ where $n$ denotes any outward normal vector to the boundary $\partial \Omega,$
    \item the mass is conserved:
    \begin{equation*} 
 \int_\Omega \rho(t,x) \, dx = M < \infty  \quad \text{for} \ t \in \mathopen{[}0,T\mathclose{[},
  \end{equation*} 
  \item the energy is finite:
  \begin{equation*} 
  \underset{[0,T[}{\sup} \int_\Omega  \tfrac{1}{2} \rho |u|^2 + h(\rho) + \kappa \tfrac{1}{2} \rho (W * \rho) \, dx < \infty.
  \end{equation*}
\end{enumerate}
Furthermore, a weak solution of (\ref{EW}) is called dissipative if $\rho (W * \rho) \in C\big([0,T[;  L^1(\Omega) \big)$ and
 \begin{equation} \label{weakdissipEW1}  
 \begin{split}
-  \int_0^T \int_\Omega & \big( \tfrac{1}{2} \rho |u|^2 + h(\rho) + \kappa \tfrac{1}{2} \rho (W * \rho) \big) \dot{\theta}(t) \, dxdt + \int_0^T \int_\Omega \nu \rho |u|^2 \theta(t) \, dxdt \\
    \leq  \int_\Omega &  \big( \tfrac{1}{2} \rho |u|^2 + h(\rho) + \kappa \tfrac{1}{2} \rho (W * \rho) \big) \big|_{t=0}  \theta(0) \, dx
    \end{split}
   \end{equation} 
  for all non-negative and compactly supported $\theta \in W^{1,\infty}([0,T[)$.
  \end{definition}
\vspace{2mm} \par 
Now, we present the notion of strong solution for (\ref{EW}). 
\begin{definition}
A Lipschitz pair $(\bar \rho, \bar u),$ with positive $\bar \rho$ and such that 
 \[\bar \rho \in L^\infty\big([0,T[;  L^\infty(\Omega) \big),\]
 \[\bar u \in L^\infty\big([0,T[;W^{1,\infty}(\Omega,\mathbb{R}^d)\big), \]
is a strong solution of (\ref{EW}) if:
\begin{enumerate}[(i)]
\item $(\bar \rho, \bar u)$ satisfies 
\begin{equation} \label{strongEW1}
\begin{cases}
\partial_t  \bar \rho + \nabla \cdot (\bar \rho \bar u) = 0, \\
\partial_t \bar u + (\bar u \cdot \nabla) \bar u + \nabla h^\prime(\bar \rho) + \kappa \nabla W * \bar \rho = -\nu \bar \rho \bar u,
\end{cases}
\end{equation}
for a.e. $(t,x) \in \mathopen{]}0, T\mathclose{[} \times \Omega,$
\item the functions $\varphi$, $\tilde{\varphi}$ given by
\[\varphi = \theta \big(-\tfrac{1}{2}|\bar u|^2 + h^\prime(\bar \rho) + \kappa W * \bar \rho \big),\] \[\tilde{\varphi} = \theta \bar u,\]
can be used as test functions in the weak formulation detailed above, where $\theta$ is given by (\ref{theta}),
\item $(\bar \rho, \bar u)$ satisfies the boundary conditions (\ref{BCEW}) in the case of a bounded domain. 
\end{enumerate}
\end{definition}
Also, we prescribe initial data $(\bar \rho_0, \bar u_0)$ that satisfies 
\begin{equation*} 
\int_\Omega \bar \rho_0 \, dx = \bar M < \infty
\qquad \mbox{and} \qquad
 \int_\Omega \tfrac{1}{2}\bar \rho_0 |\bar u_0|^2 + h(\bar \rho_0) + \kappa \tfrac{1}{2} \bar \rho_0 (W * \bar \rho_0) \, dx < \infty.
\end{equation*}

A feature of a strong solutions is that it satisfies the energy identity. This can be seen by multiplying the second equation of (\ref{strongEW1}) by $\bar \rho \bar u$ and then integrating it over space, leading to:
\begin{equation} \label{strongenergyEW1} 
\frac{d}{dt} \int_\Omega \tfrac{1}{2}\bar \rho |\bar u|^2+h( \bar \rho) + \kappa \tfrac{1}{2} \bar \rho (W * \bar \rho) \, dx = - \int_\Omega \nu \bar \rho |\bar u|^2 \, dx.
\end{equation}

Furthermore, we consider strong solutions $(\bar \rho, \bar u)$ of (\ref{EW}) that are bounded away from vacuum, that is, there exist $\hat{\delta} > 0$ and $\hat{M} < \infty$ such that 
\begin{equation} \label{boundawayzero}
\bar \rho(t,x) \in [\hat{\delta},\hat{M}]  \quad \text{for} \ (t,x) \in [0,T[.
\end{equation}
Weak and strong solutions of (\ref{EW}) are compared using the relative energy $\Psi : [0, T[ \to \mathbb{R}$ defined as follows:
\[\Psi(t) = \int_\Omega \tfrac{1}{2} \rho |u - \bar u|^2 + h(\rho| \bar \rho) + \kappa \tfrac{1}{2} (\rho - \bar \rho) \big(W * (\rho - \bar \rho) \big) \, dx. \]
For the notions of solutions described above, we obtain the following weak-strong uniqueness result:
\begin{theorem} \label{thmwsu}
Let $d>1$ and let $(\rho, u)$ be a dissipative weak solution of (\ref{EW}) with $1-d < \alpha < 0,$ $\gamma \geq 2 - \frac{\alpha + d}{d}$ and $\kappa$ sufficiently small. Let $(\bar \rho, \bar u)$ be a strong and bounded away from vacuum solution of (\ref{EW}). Then, there exists $C>0$ such that $\Psi$ satisfies 
\begin{equation} \label{stability1}
\underset{[0,T[}{\sup} \ \Psi \leq e^{CT}\Psi(0).
\end{equation}
Therefore, if $(\rho_0, u_0) = (\bar \rho_0, \bar u_0)$ then
\[(\rho(t), u(t)) = (\bar \rho(t), \bar u(t))  \quad \text{for} \ t \in [0,T[. \]
\end{theorem}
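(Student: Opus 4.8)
The plan is to run the relative energy method: turn the (formal) relative energy identity of Section~\ref{section_energy} into a rigorous differential inequality comparing the dissipative weak solution $(\rho,u)$ with the strong solution $(\bar\rho,\bar u)$, bound the resulting right-hand side by a constant multiple of $\Psi$, and close with Gr\"onwall. Concretely, I would first insert into the weak formulation of \eqref{EW} the test functions $\varphi=\theta\big(-\tfrac12|\bar u|^2+h'(\bar\rho)+\kappa W*\bar\rho\big)$ and $\tilde\varphi=\theta\bar u$ --- admissible exactly because $(\bar\rho,\bar u)$ is a strong solution, with $\theta$ a Lipschitz temporal cut-off approximating $\mathbf 1_{[0,t)}$. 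Combining this with the energy identity \eqref{strongenergyEW1} for $(\bar\rho,\bar u)$ and the dissipative energy inequality \eqref{weakdissipEW1} for $(\rho,u)$ (localised by the same $\theta$), then using the symmetry of $W$ and the strong equations \eqref{strongEW1} to cancel the contributions that are linear in the perturbation, one is led --- after sending $\theta\to\mathbf 1_{[0,t)}$ --- to
\[
\Psi(t)+\int_0^t\!\!\int_\Omega \nu\rho|u-\bar u|^2\,dx\,ds \ \le\ \Psi(0)+\int_0^t\big(I_1+I_2+I_3\big)(s)\,ds ,
\]
where $I_1=-\int_\Omega\nabla\bar u:\rho(u-\bar u)\otimes(u-\bar u)\,dx$, $I_2=-\int_\Omega(\nabla\cdot\bar u)\,p(\rho|\bar\rho)\,dx$, and $I_3=\kappa\int_\Omega(\rho-\bar\rho)\,\bar u\cdot\nabla W*(\rho-\bar\rho)\,dx$ are the three right-hand terms of the relative energy identity.

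Next I would dispose of the benign terms. Since $\bar u\in L^\infty_t W^{1,\infty}_x$, one has $|I_1|\le\|\nabla\bar u\|_\infty\int_\Omega\rho|u-\bar u|^2\,dx$, a multiple of the relative kinetic energy; and \eqref{boundawayzero} together with $\gamma>1$ gives the standard pointwise comparison $p(\rho|\bar\rho)\le C\,h(\rho|\bar\rho)$, so $|I_2|\le C\int_\Omega h(\rho|\bar\rho)\,dx$. To turn these into bounds by $C\Psi(t)$ one needs a quantitative lower bound $\Psi\ge c\int_\Omega\big(\tfrac12\rho|u-\bar u|^2+h(\rho|\bar\rho)\big)\,dx$, i.e.\ nonnegativity of $\Psi$; this is precisely where the smallness of $\kappa$ is used, to absorb the possibly negative relative interaction energy $\tfrac\kappa2\int_\Omega(\rho-\bar\rho)\big(W*(\rho-\bar\rho)\big)\,dx$ into a fraction of $\int_\Omega h(\rho|\bar\rho)\,dx$ by the same Hardy--Littlewood--Sobolev estimate as below --- this is the content of Section~\ref{section_proofs_nonnegativity}.

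The interaction term $I_3$ is the heart of the matter, and the step I expect to be the main obstacle. Unlike the Euler--Poisson case, where $\nabla\cdot(\nabla W*g)=g$ allows an integration by parts bounding $I_3$ by $\|\nabla\bar u\|_\infty$ times the relative interaction energy, no such identity is available for $-d<\alpha<0$. Writing $g:=\rho-\bar\rho$, I would split it --- using convexity and the growth of $h$ --- into a part supported where $\rho\le\max(1,2\hat M)$, which is pointwise bounded and controlled in every $L^r$ by the relative internal energy $\int_\Omega h(\rho|\bar\rho)\,dx$, and a part supported where $\rho$ is large, controlled in $L^\gamma$ by $\int_\Omega h(\rho|\bar\rho)\,dx$ and, by interpolation with the conserved mass, in every $L^r$ with $1\le r\le\gamma$. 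Since $\nabla W(x)=\pm|x|^{\alpha-2}x$ one has $|\nabla W*g|\le C\big(|\cdot|^{-(1-\alpha)}*|g|\big)$, and the hypothesis $\alpha>1-d$ is exactly $1-\alpha<d$, so the relevant (specific) case of the Hardy--Littlewood--Sobolev inequality for Riesz potentials applies; feeding in the relative-energy control of the Lebesgue norms of the two pieces of $g$ --- and, where needed, the negative-order control furnished by the relative interaction energy itself --- the Lebesgue exponents produced by the inequality close up precisely under $\gamma\ge 2-\frac{\alpha+d}{d}$, giving $|I_3|\le C\int_\Omega h(\rho|\bar\rho)\,dx\le C\Psi(t)$. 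Keeping this exponent bookkeeping consistent --- in particular ensuring that every Lebesgue exponent applied to the ``large'' part of $g$ stays below $\gamma$ --- is the real difficulty, and it is what pins down the admissible pair $(\gamma,\alpha)$.

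Finally, with the estimates above the inequality of the first step reads $\Psi(t)\le\Psi(0)+C\int_0^t\Psi(s)\,ds$, so \eqref{stability1} follows from Gr\"onwall's lemma. If $(\rho_0,u_0)=(\bar\rho_0,\bar u_0)$ then $\Psi(0)=0$, hence $\Psi\equiv0$; nonnegativity and strict convexity of $\rho\mapsto h(\rho|\bar\rho)$ force $\rho=\bar\rho$ a.e., and then $\rho|u-\bar u|^2\equiv0$ together with $\bar\rho\ge\hat\delta>0$ forces $u=\bar u$ a.e., which is the asserted uniqueness.
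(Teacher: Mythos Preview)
Your overall architecture is correct and matches the paper: derive the relative energy inequality with the three remainders, dispose of $I_1,I_2$ using $\|\nabla\bar u\|_\infty$ and $p(\rho|\bar\rho)\le C\,h(\rho|\bar\rho)$, invoke Section~\ref{section_proofs_nonnegativity} for nonnegativity, and close with Gr\"onwall. The gap is in your treatment of $I_3$.

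Your plan is to pull out $\|\bar u\|_\infty$ and control $\int_\Omega |g|\,|\nabla W*g|\,dx\le\int_\Omega |g|\,I_{\beta-1}|g|\,dx$ (with $g=\rho-\bar\rho$, $\beta=\alpha+d$) by HLS. But HLS applied to $I_{\beta-1}$ gives $\|g\|_{p'}^2$ with $p'=\tfrac{2d}{d+\beta-1}$, and the interpolation with the $L^1$ mass then requires $\gamma\ge 2-\tfrac{\beta-1}{d}$, which is \emph{strictly larger} than the hypothesis $\gamma\ge 2-\tfrac{\beta}{d}$. Concretely, for $d=3,\ \alpha=-1$ your argument needs $\gamma\ge 5/3$, while the theorem allows $\gamma\ge 4/3$; so the exponents do \emph{not} close at the stated threshold, and the vague appeal to ``negative-order control furnished by the relative interaction energy'' does not fix this (that energy involves the $I_\beta$ kernel, not $I_{\beta-1}$, and is not sign-definite anyway).

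The missing idea is a symmetrisation that gains back the lost power of $|x-y|$. Using $\nabla W(-x)=-\nabla W(x)$ one rewrites
\[
I_3=\frac{\kappa}{2}\iint_{\Omega\times\Omega} g(x)\,g(y)\,\big(\bar u(x)-\bar u(y)\big)\cdot\nabla W(x-y)\,dx\,dy,
\]
and since $|\bar u(x)-\bar u(y)|\,|\nabla W(x-y)|\le C\|\nabla\bar u\|_\infty\,|x-y|^{\alpha}$, the singular kernel becomes $|x-y|^{-(d-\beta)}$, i.e.\ $I_\beta$ rather than $I_{\beta-1}$. Then $|I_3|\le C\,\|(\rho-\bar\rho)\,I_\beta(\rho-\bar\rho)\|_1$, and the \emph{same} HLS/interpolation estimate already used in Proposition~\ref{Cstar} yields $|I_3|\le C\int_\Omega h(\rho|\bar\rho)\,dx$ precisely under $\gamma\ge 2-\tfrac{\alpha+d}{d}$. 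With this correction the rest of your proof goes through unchanged.
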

\vspace{1mm}
\subsection{High-friction limit towards a gradient flow} \label{section_main_results_RL}~\par
Taking $\nu \to \infty$ in (\ref{EW}) formally yields the equilibrium state $u=0$. Therefore, in order to capture the limiting phenomena we must first consider a diffusive scaling. Let $\varepsilon = 1/ \nu^2 $ and consider the scaling 
\[\tilde{\rho}(t,x) = \rho(\varepsilon^{-1/2}t,x),\] 
\[\tilde{u}(t,x) = \varepsilon^{-1/2} u (\varepsilon^{-1/2}t,x). \]
Dropping the tilde notation gives
\begin{equation} \label{EW2}
 \begin{dcases}
  \partial_t \rho + \nabla \cdot (\rho u) = 0, \\
  \varepsilon \big( \partial_t (\rho u) + \nabla \cdot (\rho u \otimes u) \big) + \nabla \rho^\gamma + \kappa \rho \nabla W* \rho = - \rho u. \ 
 \end{dcases}
\end{equation} 

Taking $\varepsilon \to 0$ in (\ref{EW2}), one formally obtains (\ref{GF}). \par 
We consider dissipative weak solutions of (\ref{EW2}), which are completely analogous to the dissipative weak solutions of (\ref{EW}) described in the previous section. \par
Now, we present the notion of strong solution of (\ref{GF}), to which one can establish the convergence in the high-friction limit.  \par 
\begin{definition}
A positive Lipschitz function $\bar \rho,$ such that 
 \[\bar \rho \in W^{1, \infty}\big([0,T[;  W^{1,\infty}(\Omega) \big), \]
 is a strong solution of (\ref{GF}) if:
\begin{enumerate}[(i)]
\item $\bar \rho$ satisfies 
\begin{equation} \label{strongGF}
\partial_t  \bar \rho = \nabla \cdot \big( \bar \rho \nabla(h^\prime(\bar \rho)+ \kappa W * \bar \rho) \big) \\
\end{equation}
for a.e. $(t,x) \in \mathopen{]}0, T\mathclose{[} \times \Omega,$
\item the functions $\varphi: \mathopen{[}0,T\mathclose{[} \times \Omega \to \mathbb{R}, \ \tilde{\varphi}:\mathopen{[}0,T\mathclose{[} \times \Omega \to \mathbb{R}^d$ given by 
\[\varphi = \theta \big(-\tfrac{1}{2}| \nabla h^\prime(\bar \rho) + \kappa \nabla W * \bar \rho|^2 + h^\prime(\bar \rho) + \kappa W * \bar \rho \big), \] 
\[\tilde{\varphi} = \theta (- \nabla h^\prime(\bar \rho) - \kappa \nabla W * \bar \rho ),\]
can be used as test functions in weak formulation detailed above, where $\theta$ is given by (\ref{theta}),
\item $\bar \rho$ satisfies the boundary conditions (\ref{BCGF}) in the case of a bounded domain,
\item Additionally, 
\[ \frac{\partial^2 \bar \rho}{\partial x_i \partial x_j}, \ \frac{\partial^2 \bar \rho}{\partial x_i \partial t}, \ \frac{\partial^2 (W * \bar \rho)}{\partial x_i \partial x_j}, \ \frac{\partial^2 (W * \bar \rho)}{\partial x_i \partial t} \in L^\infty(\mathopen{[}0,T\mathclose{[} \times \Omega)  \]
for $i,j = 1, \ldots, d.$
\end{enumerate}
 \par 
\end{definition}
We prescribe initial data $\bar \rho_0$ that satisfies 
\begin{equation*} 
\int_\Omega \bar \rho_0 \, dx = \bar M < \infty,\qquad \mbox{and} \qquad
 \int_\Omega h(\bar \rho_0) + \kappa \tfrac{1}{2} \bar \rho_0 (W * \bar \rho_0) \, dx < \infty.
\end{equation*}
\par 
To compare a solution $(\rho, u)$ of (\ref{EW2}) with a solution $\bar \rho$ of (\ref{GF}), we view the solution of (\ref{GF}) as an approximation to (\ref{EW2}) using the following equations:
\begin{equation*} \label{approximateEW2}
 \begin{dcases}
  \partial_t \bar \rho + \nabla \cdot (\bar \rho \bar u) = 0, \\
  \varepsilon \big( \partial_t (\bar \rho \bar u) + \nabla \cdot (\bar \rho \bar u \otimes \bar u) \big) + \nabla p(\bar \rho) + \kappa \bar \rho \nabla W* \bar \rho = - \bar \rho \bar u + \varepsilon \bar e. \ 
 \end{dcases}
\end{equation*} 
Here, $\bar u$ is defined as $\bar u = - \nabla h^\prime(\bar \rho) - \kappa \nabla W * \bar \rho,$ and $\bar e$ is given as 
$\bar e =   \partial_t (\bar \rho \bar u) + \nabla \cdot (\bar \rho \bar u \otimes \bar u).$
Under the specified regularity conditions, there exists a positive constant $C$ such that 
\[\|\bar u \|_\infty, \| \nabla \bar u \|_\infty, \| \bar e \|_\infty \leq C. \] \par

These strong solutions dissipate the total energy. Indeed, multiplying the equation $\bar u = - \nabla h^\prime(\bar \rho) - \kappa \nabla W * \bar \rho$ by $\bar \rho \bar u$, then using equation (\ref{strongGF}) and integrating it over space leads to:
\begin{equation*} 
\frac{d}{dt} \int_\Omega h(\bar \rho) + \kappa \tfrac{1}{2} \bar \rho (W * \bar \rho) \, dx = - \int_\Omega \bar \rho |\bar u|^2 \, dx.
\end{equation*}

In this case, we consider the relative energy $\Phi_\varepsilon: [0, T[ \to \mathbb{R}$ defined as
\[\Phi_\varepsilon(t) = \int_\Omega \varepsilon \tfrac{1}{2} \rho |u - \bar u|^2 + h(\rho| \bar \rho) + \kappa \tfrac{1}{2} (\rho - \bar \rho) \big(W * (\rho - \bar \rho) \big) \, dx. \]
The convergence in the high-friction limit of a weak solution of (\ref{EW2}) towards a strong solution of (\ref{GF}) is established in the following result:
\begin{theorem} \label{thmrelax}
Let $d >1$ and let $(\rho, u)$ be a dissipative weak solution of (\ref{EW2}) with $1-d < \alpha < 0,$ $\gamma \geq 2 - \frac{\alpha + d }{d}$ and $\kappa$ sufficiently small. Let $\bar \rho$ be a strong solution of (\ref{GF}) satisfying (\ref{boundawayzero}). Then, there exists $C>0$ such that $\Phi$ satisfies
\begin{equation} \label{stability2}
\underset{[0,T[}{\sup} \ \Phi_\varepsilon \leq e^{CT}(\Phi_\varepsilon(0) + \varepsilon^2) .
\end{equation} 
Thus, if $\Phi_\varepsilon(0) \to 0$ as $\varepsilon \to 0,$ then 
\[\underset{[0,T[}{\sup} \ \Phi_\varepsilon \to 0 \quad \text{as} \ \varepsilon \to 0.\]

\end{theorem}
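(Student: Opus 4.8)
The plan is to run the relative energy method exactly as for Theorem~\ref{thmwsu}: the only structural novelty is that $\bar\rho$ now solves the gradient flow (\ref{GF}), equivalently the approximate system (\ref{approximateEW2}) with consistency error $\varepsilon\bar e$, rather than (\ref{EW2}) itself, so that the relative energy inequality acquires one extra, $O(\varepsilon)$, forcing term. The first step is to derive such an inequality for $\Phi_\varepsilon$. Combining the dissipative energy inequality for weak solutions of (\ref{EW2}) (the $\varepsilon$-weighted analogue of (\ref{weakdissipEW1})), the energy identity for the strong solution $\bar\rho$, and the weak formulations of the continuity and momentum equations of (\ref{EW2}) tested against the functions $\varphi,\tilde\varphi$ built from $(\bar\rho,\bar u)$ with $\bar u=-\nabla h'(\bar\rho)-\kappa\nabla W*\bar\rho$, regularised by a temporal cut-off, and using the relative energy identity of Section~\ref{section_energy} as a template, one obtains for a.e.\ $t\in[0,T[$
\[
\Phi_\varepsilon(t) + \int_0^t\!\!\int_\Omega \rho|u-\bar u|^2\,dx\,ds \;\le\; \Phi_\varepsilon(0) + \int_0^t\big(\mathrm{I}+\mathrm{II}+\mathrm{III}+\mathrm{IV}\big)\,ds,
\]
with $\mathrm{I}=-\varepsilon\int_\Omega\nabla\bar u:\rho(u-\bar u)\otimes(u-\bar u)\,dx$, $\mathrm{II}=-\int_\Omega(\nabla\cdot\bar u)\,p(\rho|\bar\rho)\,dx$, $\mathrm{III}=\kappa\int_\Omega(\rho-\bar\rho)\,\bar u\cdot\nabla W*(\rho-\bar\rho)\,dx$, and $\mathrm{IV}$ the defect term collecting the $O(\varepsilon)$ contributions of $\bar e$ (present because $(\bar\rho,\bar u)$ solves (\ref{approximateEW2}), not (\ref{EW2})).

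The terms $\mathrm{I}$, $\mathrm{II}$, $\mathrm{IV}$ are handled routinely. Since $p(\rho|\bar\rho)=(\gamma-1)h(\rho|\bar\rho)$ and $\|\nabla\bar u\|_\infty\le C$, one has $\mathrm{II}\le C\int_\Omega h(\rho|\bar\rho)\,dx$; using $\Phi_\varepsilon\ge \tfrac\varepsilon2\int_\Omega\rho|u-\bar u|^2\,dx$ (valid once $\Phi_\varepsilon\ge0$ is known), $\mathrm{I}\le C\Phi_\varepsilon$; and, using $\|\bar e\|_\infty\le C$ coming from the assumed regularity of $\bar\rho$, the bound (\ref{boundawayzero}), and Young's inequality, $\mathrm{IV}\le \tfrac14\int_\Omega\rho|u-\bar u|^2\,dx + C\varepsilon^2$, the dissipation piece being absorbed by the left-hand side — this $C\varepsilon^2$ is the origin of the $\varepsilon^2$ in (\ref{stability2}).

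The delicate term is $\mathrm{III}$, where a Hardy--Littlewood--Sobolev inequality enters. Using that $\nabla W$ is odd one symmetrises, $\mathrm{III}=\tfrac\kappa2\int_\Omega\!\!\int_\Omega (\rho-\bar\rho)(x)(\rho-\bar\rho)(y)\,(\bar u(x)-\bar u(y))\cdot\nabla W(x-y)\,dx\,dy$, and then the Lipschitz bound $|\bar u(x)-\bar u(y)|\le\|\nabla\bar u\|_\infty|x-y|$ together with $|\nabla W(z)|=|z|^{\alpha-1}$ reduces the kernel to $\|\nabla\bar u\|_\infty|x-y|^{\alpha}$, so that $\mathrm{III}$ is controlled by the Riesz bilinear form $\int_\Omega\!\int_\Omega |\rho-\bar\rho|(x)|\rho-\bar\rho|(y)|x-y|^{\alpha}\,dx\,dy$. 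Since $0<-\alpha<d$, the diagonal case $p=q$ of HLS gives
\[
\Big|\int_\Omega\!\!\int_\Omega\frac{(\rho-\bar\rho)(x)(\rho-\bar\rho)(y)}{|x-y|^{-\alpha}}\,dx\,dy\Big|\le C\,\|\rho-\bar\rho\|_{L^p(\Omega)}^2,\qquad p=\frac{2d}{2d+\alpha}.
\]
One then bounds $\|\rho-\bar\rho\|_{L^p}^2$ by $\int_\Omega h(\rho|\bar\rho)\,dx$: splitting $\rho-\bar\rho$ on $\{|\rho-\bar\rho|\le1\}$ and $\{|\rho-\bar\rho|>1\}$, using that $\Omega$ has finite measure, that $h(\rho|\bar\rho)$ is comparable to $|\rho-\bar\rho|^2$ for bounded densities and to $\rho^\gamma$ for large densities (as $\bar\rho\in[\hat\delta,\hat M]$), that the hypotheses $1-d<\alpha<0$ and $\gamma\ge 2-\tfrac{\alpha+d}{d}$ force $1<p\le\tfrac{d-\alpha}{d}\le\gamma$, and the uniform a priori bound on $\int_\Omega h(\rho|\bar\rho)\,dx$ from the finite-energy assumption, one gets $\|\rho-\bar\rho\|_{L^p}^2\le C\int_\Omega h(\rho|\bar\rho)\,dx$. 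Hence $\mathrm{III}\le C\kappa\int_\Omega h(\rho|\bar\rho)\,dx$, and applying the very same bound to the interaction part of $\Phi_\varepsilon$ shows that for $\kappa$ small $\Phi_\varepsilon\ge\tfrac\varepsilon2\int_\Omega\rho|u-\bar u|^2\,dx+\tfrac12\int_\Omega h(\rho|\bar\rho)\,dx\ge0$, which supplies both the non-negativity used above and the inequality $\int_\Omega h(\rho|\bar\rho)\,dx\le2\Phi_\varepsilon$, so that $\mathrm{III}\le C\kappa\,\Phi_\varepsilon$.

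Collecting the estimates, the relative energy inequality becomes $\Phi_\varepsilon(t)\le \Phi_\varepsilon(0)+C\varepsilon^2+C\int_0^t\Phi_\varepsilon(s)\,ds$, and Grönwall's lemma (after adjusting constants) yields (\ref{stability2}); the convergence statement follows at once. The main obstacle is the term $\mathrm{III}$: besides making the symmetrisation legitimate at the level of the weak formulation — which needs $\alpha>1-d$ so that the singularity $|z|^{\alpha-1}$ of $\nabla W$ is locally integrable — one must check that the diagonal HLS exponent $p=\tfrac{2d}{2d+\alpha}$ genuinely satisfies $1<p\le\gamma$ in the admissible range, which is exactly what forces the restriction $\gamma\ge 2-\tfrac{\alpha+d}{d}$ and lets the nonlocal interaction be absorbed by the internal relative energy $\int_\Omega h(\rho|\bar\rho)\,dx$ rather than leaving an uncontrolled remainder.
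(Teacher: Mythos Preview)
Your proof is correct and follows essentially the same route as the paper: the relative energy inequality with the four terms $\mathrm{I}$--$\mathrm{IV}$ matches the paper's $\mathcal{J}_1$--$\mathcal{J}_4$ (with $\mathcal{J}_4=-\int_0^t\!\int_\Omega\varepsilon\tfrac{\rho}{\bar\rho}\bar e\cdot(u-\bar u)\,dx\,d\tau$ made explicit there), the symmetrisation plus Lipschitz bound plus diagonal HLS for $\mathrm{III}$ is identical to the paper's treatment of $\mathcal{I}_3$ and Proposition~\ref{Cstar}, and the Young-inequality splitting of $\mathrm{IV}$ to produce the $\varepsilon^2$ term and absorb the rest into the dissipation, followed by Gronwall, is exactly what the paper does. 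The only cosmetic difference is that the paper splits $\|\rho-\bar\rho\|_{L^p}^2$ according to $\{\rho\le R\}$ versus $\{\rho>R\}$ with $R$ from Lemma~\ref{hlemma}, whereas you split on $\{|\rho-\bar\rho|\le1\}$; both lead to the same conclusion.
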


\section{Proofs} \label{section_proofs}
This section contains the proofs of Theorem \ref{thmwsu} and Theorem \ref{thmrelax}.
\subsection{Auxiliary results} \label{section_proofs_auxiliary}
First, we recall the notion of Riesz potentials. Given a measurable function $f: \Omega \to \mathbb{R}$ and $0 < \beta < d,$ the Riesz potential of degree $\beta$ of $f$ is the function $I_\beta f : \Omega \to \mathbb{R}$ given by \[I_\beta f(x) = \int_\Omega \frac{f(y)}{|x-y|^{d-\beta}} \, dy.\]
Furthermore, if $f \in L^q(\Omega)$ with $1 < q < d/\beta,$ then $I_\beta f$ enjoys the following integrability estimate \cite{hedberg}:
\begin{equation} \label{stein}
\|I_\beta f \|_{\frac{dq}{d-\beta q}} \leq C(d,\beta,q) \ \|f\|_q .
\end{equation}
Using this terminology, the potential $W * \rho$ can be rewritten as 
\[
W * \rho = \pm \tfrac{1}{d-\beta}I_\beta \rho \ , 
\]
where $\beta = \alpha + d, \ 0 < \beta < d. $ \par 
The next result aims to give meaning to the gradient of the potential $W \ast \rho$, following the same ideas of \cite[Proposition 3.1]{alvesrole}.
\begin{proposition}
Let $d > 1$ and let $\rho \in C\big([0,T[;  L^\gamma(\Omega) \big)$ with $\gamma \geq \gamma_0 = 2 - \tfrac{\beta}{d}, \ 1 < \beta < d.$ Then $W * \rho \in C\big([0,T[;  L^r(\Omega) \big),$ where $r = \frac{(2d-\beta)d}{(d-\beta)^2}$, and its spatial gradient $\nabla(W * \rho)$ is given by
\[\nabla(W * \rho)(t,x) = \nabla W * \rho(t,x) = \pm \int_\Omega \frac{(x-y)}{|x-y|^{d+2-\beta}} \rho(t,y) \, dy  ,\]  
from which \[|\nabla W * \rho | \leq I_{\beta - 1} |\rho|  . \]
\end{proposition}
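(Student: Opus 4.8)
The plan is to reduce everything to Riesz potentials, for which \eqref{stein} is available. Recall from the excerpt that $W\ast\rho=\pm\tfrac{1}{d-\beta}I_\beta\rho$, and note that $\nabla W(x)=\pm\,x\,|x|^{-(d+2-\beta)}$, so that $|\nabla W(x)|=|x|^{-(d-(\beta-1))}$. First I would establish the spatial integrability and time continuity of $W\ast\rho$. Since $\Omega$ is bounded (or the torus), $L^\gamma(\Omega)\hookrightarrow L^{\gamma_0}(\Omega)$ continuously; and one checks $1<\gamma_0<d/\beta$, the second inequality being equivalent to $(d-\beta)^2>0$. Thus \eqref{stein} applies with $q=\gamma_0$ — after extending $\rho$ by zero to $\mathbb{R}^d$ — and gives $\|I_\beta\rho\|_{L^s(\Omega)}\le C\|\rho\|_{L^{\gamma_0}(\Omega)}$ with $s=\frac{d\gamma_0}{d-\beta\gamma_0}$; using $\gamma_0=\frac{2d-\beta}{d}$ one has $d-\beta\gamma_0=\frac{(d-\beta)^2}{d}$, hence $s=\frac{(2d-\beta)d}{(d-\beta)^2}=r$. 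Continuity of $t\mapsto(W\ast\rho)(t,\cdot)\in L^r(\Omega)$ then follows by composing the continuous curve $t\mapsto\rho(t,\cdot)\in L^\gamma(\Omega)$ with the continuous embedding into $L^{\gamma_0}(\Omega)$ and with the bounded linear operator $\rho\mapsto I_\beta\rho$ from $L^{\gamma_0}(\Omega)$ to $L^r(\Omega)$.

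Next I would identify $\nabla(W\ast\rho)$. The crucial point is that $|\nabla W|=|x|^{-(d-(\beta-1))}\in L^1_{\mathrm{loc}}(\mathbb{R}^d)$ precisely because $\beta>1$ — the singularity has order $d-(\beta-1)<d$ — while $W\in L^1_{\mathrm{loc}}(\mathbb{R}^d)$ because $\beta<d$; integrating by parts over $\{|x|>\delta\}$, whose spherical boundary term is $O(\delta^{\beta-1})\to0$ (again since $\beta>1$), shows $W\in W^{1,1}_{\mathrm{loc}}(\mathbb{R}^d)$ with weak gradient $\nabla W$. Fixing $t$, writing $\rho=\rho(t,\cdot)$ extended by zero, and testing $W\ast\rho$ against $\partial_i\psi$ for $\psi\in C_c^\infty(\Omega)$, I would (i) exchange the order of integration in $\int(W\ast\rho)\,\partial_i\psi\,dx$ by Fubini, legitimate since $\int_{\supp\psi}\int_\Omega|x-y|^{-(d-\beta)}|\rho(y)|\,dy\,dx<\infty$ (integrable kernel over a bounded set); (ii) move the derivative from $\psi$ onto $W$ in the inner integral, using $W\in W^{1,1}_{\mathrm{loc}}$ and translation invariance of weak differentiation; and (iii) apply Fubini once more, now justified by $\int_{\supp\psi}I_{\beta-1}|\rho|\,dx<\infty$, which holds because $I_{\beta-1}|\rho|\in L^1(\Omega)$ by yet another Fubini with the integrable kernel $|x|^{-(d-(\beta-1))}$ and $\rho\in L^1(\Omega)$. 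This yields $\nabla(W\ast\rho)=(\nabla W)\ast\rho$ almost everywhere, which is the asserted formula; the pointwise bound $|\nabla W\ast\rho|\le I_{\beta-1}|\rho|$ then follows at once from $\big|\tfrac{x-y}{|x-y|^{d+2-\beta}}\big|=|x-y|^{-(d-(\beta-1))}$.

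I expect the only genuinely delicate part to be steps (ii)--(iii): one must simultaneously control the singularity of $W$ (harmless since $\beta<d$) and that of $\nabla W$ (harmless since $\beta>1$) so that all the Fubini exchanges and the transfer of the derivative onto the Riesz kernel are legitimate, but beyond this bookkeeping of exponents nothing new occurs. This reproduces, in the present bounded/periodic setting, the argument of \cite[Proposition 3.1]{alvesrole}, and the hypothesis $1<\beta<d$ is precisely what renders both singularities admissible.
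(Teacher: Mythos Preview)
Your proposal is correct and follows essentially the same approach as the paper: both parts use \eqref{stein} with $q=\gamma_0$ for the $L^r$ bound and continuity, and for the gradient identity both rely on the ball-excision integration by parts with boundary term $O(\varepsilon^{\beta-1})\to 0$ (which is exactly where $\beta>1$ enters). The only cosmetic difference is that you factor out the statement ``$W\in W^{1,1}_{\mathrm{loc}}$ with weak gradient $\nabla W$'' as a preliminary observation and then invoke it, whereas the paper inlines the same $\varepsilon$-ball computation directly inside the convolution integral $\int_\Omega W(x-y)v_{x_i}(x)\,dx$.
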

 \begin{proof}
We prove the first assertion using (\ref{stein}). Under the current hypotheses, $\rho \in C\big([0,T[;  L^{\gamma_0}(\Omega) \big)$. A straightforward calculation gives:
\[ \frac{d\gamma_0}{d- \beta \gamma_0} = \frac{(2d-\beta)d}{(d-\beta)^2} = r > 1 , \]
and $\gamma_0 < \frac{d}{\beta}$.
Therefore, 
\[\|W * \rho \|_r = \tfrac{1}{d - \beta}\| I_\beta \rho\|_{\frac{d\gamma_0}{d- \beta \gamma_0}} \leq C \, \| \rho\|_{\gamma_0}. \] The linearity of $I_\beta$ then implies that $W * \rho \in C\big([0,T[;  L^1(\Omega) \big),$ as desired. \par
Now, we proceed to prove the second assertion. The condition $1 < \beta$ is required in this part. Take an arbitrary $v \in C_c^\infty(\Omega).$ We aim to prove that 
\[ \int_\Omega (W * \rho) v_{x_i} \, dx = - \int_\Omega (W_{x_i} * \rho) v \, dx \ , \quad i = 1, \ldots, d. \] 
Since $W * \rho \in L^1(\Omega),$ we have $\int_\Omega (W * \rho) v_{x_i} \ dx < \infty,$ which, by Fubini's theorem, implies that
\[\int_\Omega (W*\rho) v_{x_i} \, dx = \int_\Omega \int_\Omega W(x-y) v_{x_i}(x) \, dx \, \rho \, dy. \] 
Let $B_\varepsilon(y)$ be the open ball in $\Omega$ with center $y$ and radius $\varepsilon > 0.$ Note that the map $x \to W(x - y)$ is smooth in $B^c_\varepsilon(y) = \Omega \setminus B_\varepsilon(y),$ and hence 
\begin{equation*} 
\begin{split}
\int_{\Omega}  W(x-y) v_{x_i}(x) \, dx = & \int_{B_\varepsilon(y)}  W(x-y) v_{x_i}(x) \, dx + \int_{B_\varepsilon^c(y)}  W(x-y) v_{x_i}(x) \, dx \\
=& \int_{B_\varepsilon(y)}  W(x-y) v_{x_i}(x) \, dx + \int_{\partial B_\varepsilon(y)}  W(x-y) v(x) n_i(x) \, dS(x) \\
& - \int_{B_\varepsilon^c(y)}  W_{x_i}(x-y) v(x) \, dx,
\end{split}
\end{equation*} 
 where $n_i$ is the $i$-th component of the unit inward normal vector to the boundary of $B_\varepsilon(y).$ Using polar coordinates we derive:
\begin{equation*}
 \begin{split}
 \Big|\int_{B_\varepsilon(y)}  W(x-y) v_{x_i}(x) \, dx \Big| & \leq C \int_{B_\varepsilon(y)} \frac{1}{|x-y|^{d-\beta}} \, dx \\
                 & = C \int_0^\varepsilon \int_{\partial B_r(y)} \frac{1}{|x-y|^{d-\beta}} \, dS(x) dr = C \varepsilon^\beta ,         
 \end{split}
 \end{equation*}
and
 \begin{equation*}
 \begin{split}
\Big|\int_{\partial B_\varepsilon(y)}  W(x-y) v(x) n_i(x) \ dS(x)\Big| & \leq C \int_{\partial B_\varepsilon(y)} \frac{1}{|x-y|^{d-\beta}} \, dS(x) = C \varepsilon^{\beta - 1}  .
 \end{split}
 \end{equation*}  
The desired identity is then obtained by letting $\varepsilon \to 0$ above. 
 \end{proof}
 
Next, we state a lemma that combines a particular case of the Hardy-Littlewood-Sobolev (HLS) inequality with interpolation of Lebesgue spaces. For the HLS inequality, we refer to \cite[Theorem 4.3]{lieb}.
\begin{lemma} \label{HLSlemma}
Let $0 < \beta < d,$ $p = \tfrac{2d}{d+\beta},$ and $\gamma_0 = 2 - \tfrac{\beta}{d},$ so that $1 < p < \gamma_0 < 2.$ If $\rho \in L^\gamma(\Omega)$ with $\gamma \geq \gamma_0,$ then there exists $C>0$ such that 
\begin{equation} \label{HLS}
\| \rho I_\beta \rho \|_1 \leq C \, \| \rho \|_p^2 \leq C \, \|\rho \|_1^{\beta/d} \, \|\rho \|_{\gamma_0}^{\gamma_0}.
\end{equation}
\end{lemma}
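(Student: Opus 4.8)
The plan is to prove the two inequalities in \eqref{HLS} separately, combining them at the end. The first inequality $\|\rho I_\beta \rho\|_1 \le C\|\rho\|_p^2$ is exactly the diagonal case of the Hardy--Littlewood--Sobolev inequality \cite[Theorem 4.3]{lieb}. Recall that the HLS inequality states that for $f \in L^s(\mathbb{R}^d)$, $g \in L^t(\mathbb{R}^d)$ and $0 < \lambda < d$ with $\tfrac1s + \tfrac1t + \tfrac{\lambda}{d} = 2$, one has $\big|\iint f(x)|x-y|^{-\lambda} g(y)\,dx\,dy\big| \le C\|f\|_s\|g\|_t$. Taking $f = g = |\rho|$, $\lambda = d - \beta$, and $s = t = p$, the balance condition becomes $\tfrac{2}{p} + \tfrac{d-\beta}{d} = 2$, i.e. $\tfrac{2}{p} = 1 + \tfrac{\beta}{d} = \tfrac{d+\beta}{d}$, which gives precisely $p = \tfrac{2d}{d+\beta}$ as in the statement. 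Since $\rho I_\beta \rho = \iint$ of the relevant kernel against the indicator localization (and $\Omega \subseteq \mathbb{R}^d$ up to trivial extension by zero, or, on the torus, after a routine periodization argument), this yields $\|\rho I_\beta \rho\|_1 \le C\|\rho\|_p^2$. One should check that $1 < p < \gamma_0 < 2$: indeed $p < 2$ since $\beta > 0$, $\gamma_0 = 2 - \tfrac{\beta}{d} < 2$, $\gamma_0 > 1$ since $\beta < d$, and $p < \gamma_0$ reduces to $\tfrac{2d}{d+\beta} < 2 - \tfrac{\beta}{d}$, equivalently $2d^2 < (d+\beta)(2d-\beta) = 2d^2 + \beta d - \beta^2$, i.e. $0 < \beta(d-\beta)$, true.

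For the second inequality, since $1 < p < \gamma_0$ and, by hypothesis, $\rho \in L^{\gamma_0}(\Omega)$ (as $\gamma \ge \gamma_0$ and $\Omega$ has finite measure, so $L^\gamma \hookrightarrow L^{\gamma_0}$), I would interpolate the $L^p$ norm between $L^1$ and $L^{\gamma_0}$. Writing $\tfrac{1}{p} = \tfrac{1-\theta}{1} + \tfrac{\theta}{\gamma_0}$ for the appropriate $\theta \in (0,1)$, the interpolation inequality gives $\|\rho\|_p \le \|\rho\|_1^{1-\theta}\|\rho\|_{\gamma_0}^{\theta}$, hence $\|\rho\|_p^2 \le \|\rho\|_1^{2(1-\theta)}\|\rho\|_{\gamma_0}^{2\theta}$. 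It remains to identify the exponents: one solves $\tfrac{1}{p} = 1 - \theta + \tfrac{\theta}{\gamma_0}$ with $\tfrac1p = \tfrac{d+\beta}{2d}$ and $\gamma_0 = 2 - \tfrac{\beta}{d} = \tfrac{2d-\beta}{d}$. A direct computation should give $2\theta \gamma_0 = \gamma_0$, i.e. $\theta = \tfrac12$, and correspondingly $2(1-\theta) = \tfrac{\beta}{d}$; let me verify the latter is consistent: with $\theta = \tfrac12$ we need $\tfrac1p = \tfrac12 + \tfrac{1}{2\gamma_0} = \tfrac12\big(1 + \tfrac{d}{2d-\beta}\big) = \tfrac12 \cdot \tfrac{3d-\beta}{2d-\beta}$, which should equal $\tfrac{d+\beta}{2d}$; this is the identity $d(3d-\beta) = (d+\beta)(2d-\beta)$, i.e. $3d^2 - \beta d = 2d^2 + \beta d - \beta^2$, i.e. $d^2 - 2\beta d + \beta^2 = 0$, i.e. $(d-\beta)^2 = 0$ — which is false in general, so $\theta \ne \tfrac12$ and I must instead carry the exponents symbolically. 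Solving $\tfrac{d+\beta}{2d} = (1-\theta) + \tfrac{\theta d}{2d-\beta}$ gives $\tfrac{d+\beta}{2d} - 1 = \theta\big(\tfrac{d}{2d-\beta} - 1\big)$, i.e. $\tfrac{\beta - d}{2d} = \theta \cdot \tfrac{\beta - d}{2d-\beta}$, hence $\theta = \tfrac{2d-\beta}{2d}$, so $2(1-\theta) = \tfrac{\beta}{d}$ and $2\theta = \tfrac{2d-\beta}{d} = \gamma_0$. Therefore $\|\rho\|_p^2 \le \|\rho\|_1^{\beta/d}\|\rho\|_{\gamma_0}^{\gamma_0}$, which is exactly the second inequality.

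The main obstacle is not conceptual but bookkeeping: one must be careful that the HLS inequality as stated in \cite{lieb} is on $\mathbb{R}^d$, whereas here $\Omega$ is either a bounded domain or the torus. For a bounded domain, extending $\rho$ by zero reduces to the $\mathbb{R}^d$ case directly. For the torus $\mathbb{T}^d$, the kernel $|x-y|^{d-\beta}$ in $I_\beta$ should be understood as the periodized kernel (or one works on a fundamental domain); a short remark that the HLS constant survives periodization — because the singularity at $x = y$ is the only obstruction and it is identical to the Euclidean one, the tails being bounded — suffices. I would also note that the finite-measure property of $\Omega$ is what legitimizes $L^\gamma(\Omega) \hookrightarrow L^{\gamma_0}(\Omega)$ for $\gamma \ge \gamma_0$, so the hypothesis $\gamma \ge \gamma_0$ is used exactly once, to place $\rho$ in $L^{\gamma_0}$; all subsequent estimates only involve $\|\rho\|_{\gamma_0}$ and the conserved mass $\|\rho\|_1$.
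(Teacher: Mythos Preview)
Your proposal is correct and follows exactly the approach the paper indicates: the lemma is stated without proof in the paper, which only remarks that it ``combines a particular case of the Hardy--Littlewood--Sobolev (HLS) inequality with interpolation of Lebesgue spaces,'' and you have carried out precisely that combination, with the correct interpolation exponent $\theta = \tfrac{2d-\beta}{2d}$ yielding $2(1-\theta)=\tfrac{\beta}{d}$ and $2\theta=\gamma_0$. Your remarks on extending from $\Omega$ to $\mathbb{R}^d$ (zero extension for bounded domains, periodization for the torus) and on the role of finite measure for the embedding $L^\gamma \hookrightarrow L^{\gamma_0}$ are appropriate side observations that the paper leaves implicit.
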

The last result of this section, proved in \cite{lattanzio}, provides a way of bounding the relative function $h(\rho | \bar \rho)$ from below.
\begin{lemma} \label{hlemma}
Let $h(\rho) = \frac{1}{\gamma - 1} \rho^\gamma$ with $\gamma > 1,$ and let $\bar \rho \in [\delta, \bar M],$ where $\delta > 0$ and $\bar M < \infty.$ There exist $R \geq \bar M {+} 1$ and $C_1, C_2 > 0$ depending on $\gamma, \delta, \bar M$ such that 
\begin{equation*} 
 h(\rho| \bar \rho) \geq
 \begin{dcases}
  C_1 |\rho-\bar \rho|^2  \quad &\text{if} \ \rho \in  [0,R], \\
  C_2 |\rho-\bar \rho|^{\gamma}  \quad &\text{if} \ \rho \in \ ]R,\infty [.
\end{dcases}
\end{equation*}
\end{lemma}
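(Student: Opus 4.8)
The plan is to use the strict convexity of $h$ on $]0,\infty[$, where $h''(\rho)=\gamma\rho^{\gamma-2}>0$: this makes $h(\rho\,|\,\bar\rho)$ a Bregman divergence, nonnegative and vanishing precisely when $\rho=\bar\rho$. I would fix an arbitrary $R\geq \bar M+1$ — the only role of this choice being to ensure $\rho>\bar\rho$ whenever $\rho>R$ — and treat the two regimes separately: a compactness argument on $[0,R]$ and an elementary asymptotic computation on $]R,\infty[$. An alternative would be to start from the integral remainder identity $h(\rho\,|\,\bar\rho)=\gamma\int_{\bar\rho}^{\rho}(\rho-s)\,s^{\gamma-2}\,ds$ and estimate the integral by hand, but the compactness route is cleaner and automatically produces constants uniform over $\bar\rho\in[\delta,\bar M]$.

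For $\rho\in[0,R]$ I would introduce the quotient $\Phi(\rho,\bar\rho):=h(\rho\,|\,\bar\rho)\,/\,|\rho-\bar\rho|^2$ for $\rho\neq\bar\rho$, extended to the diagonal by $\tfrac12 h''(\bar\rho)=\tfrac{\gamma}{2}\bar\rho^{\gamma-2}$. Off the diagonal $\Phi$ is obviously continuous; on the diagonal, where $\bar\rho\geq\delta>0$ so that $h$ is smooth, the Hadamard-type representation $\Phi(\rho,\bar\rho)=\int_0^1\!\int_0^1 h''\big(\bar\rho+st(\rho-\bar\rho)\big)\,t\,ds\,dt$ gives joint continuity. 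One checks $\Phi>0$ everywhere on the compact set $[0,R]\times[\delta,\bar M]$: for $\rho\neq\bar\rho$ this is strict convexity, while at the corner $\rho=0$ a direct computation yields $h(0\,|\,\bar\rho)=\bar\rho^{\gamma}$, hence $\Phi(0,\bar\rho)=\bar\rho^{\gamma-2}>0$. A continuous positive function on a compact set has a positive minimum, which one takes as $C_1$; it depends only on $\gamma,\delta,\bar M$, and the first inequality follows.

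For $\rho\in\,]R,\infty[$ one has $\rho>\bar\rho$, so writing $t:=\bar\rho/\rho\in\,]0,\tfrac{\bar M}{\bar M+1}]$ and factoring out $\rho^{\gamma}$ gives
\[
\frac{h(\rho\,|\,\bar\rho)}{(\rho-\bar\rho)^{\gamma}} \;=\; \frac{1}{\gamma-1}\cdot\frac{1+(\gamma-1)t^{\gamma}-\gamma t^{\gamma-1}}{(1-t)^{\gamma}} \;=:\; \Psi(t).
\]
The numerator $N(t)=1+(\gamma-1)t^{\gamma}-\gamma t^{\gamma-1}$ has $N(0)=1$, $N(1)=0$ and $N'(t)=\gamma(\gamma-1)t^{\gamma-2}(t-1)<0$ on $]0,1[$, so $N>0$ on $[0,1[\,\supseteq\,[0,\tfrac{\bar M}{\bar M+1}]$. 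Hence $\Psi$ is continuous and strictly positive on the compact interval $[0,\tfrac{\bar M}{\bar M+1}]$ and therefore bounded below by a constant $C_2>0$ depending only on $\gamma,\bar M$; this is the second inequality.

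No step presents a genuine obstacle. The closest the argument comes to one is the need for uniformity of $C_1$ and $C_2$ in $\bar\rho$, which is why one keeps $\bar\rho$ as a variable ranging over the compact set $[\delta,\bar M]$ in the minimizations, together with the continuity of $\Phi$ up to the corner $\rho=0$ when $1<\gamma<2$: there $h''$ is singular at the origin, but the singularity is integrable and is met only at that single corner, away from the diagonal $\rho=\bar\rho\geq\delta$, so $\Phi$ remains finite and continuous there.
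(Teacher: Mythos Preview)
Your argument is correct. The paper itself does not prove this lemma---it simply cites Lattanzio--Tzavaras (2013)---so there is no in-paper proof to compare against. One small simplification: continuity of $\Phi$ at the corner $\rho=0$ requires no Hadamard representation, since $\bar\rho\geq\delta>0$ keeps that corner away from the diagonal and $\Phi$ is there just a ratio of continuous functions with nonzero denominator; the integral representation is only needed near the diagonal $\rho=\bar\rho\in[\delta,\bar M]$, where $h''$ is already bounded.
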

\subsection{Non-negativity of the relative potential energy} \label{section_proofs_nonnegativity}
In this section we prove that for sufficiently small $\kappa$ the relative potential energy remains non-negative, which implies that the relative total energy also remains non-negative. This is essential for using the relative total energy as a yardstick for comparing solutions.
\begin{proposition} \label{Cstar}
Let $\rho \in L^\gamma(\Omega)$ with $\gamma \geq 2 - \frac{\alpha + d}{d}, \ -d < \alpha < 0,$ be non-negative, and let $\bar \rho \in L^\infty(\Omega)$ be such that $0 < \delta \leq \bar \rho \leq \bar M < \infty. $ There exists a positive constant $C_*$ such that 
\begin{equation}
\Big| \int_\Omega (\rho - \bar \rho) \big( W * (\rho - \bar \rho) \big) \ dx \Big| \leq C_* \int_\Omega h(\rho | \bar \rho) \, dx.
\end{equation}
\end{proposition}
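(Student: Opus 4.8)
The plan is to bound the left-hand side by splitting the convolution kernel contribution according to the sign and estimating the bilinear form $\int_\Omega (\rho-\bar\rho)(W*(\rho-\bar\rho))\,dx$ by the $L^p$ norm of $\rho-\bar\rho$ with $p = \tfrac{2d}{d+\beta}$, $\beta = \alpha+d$. Concretely, since $W*\sigma = \pm\tfrac{1}{d-\beta}I_\beta\sigma$ for $\sigma = \rho-\bar\rho$, the Hardy-Littlewood-Sobolev inequality (the first inequality in \eqref{HLS} of Lemma \ref{HLSlemma}, applied to $|\sigma|$ and using $|I_\beta\sigma|\le I_\beta|\sigma|$) gives
\[
\Big| \int_\Omega (\rho-\bar\rho)\big(W*(\rho-\bar\rho)\big)\,dx \Big| \leq \tfrac{1}{d-\beta}\int_\Omega |\sigma|\,I_\beta|\sigma|\,dx \leq C\,\|\rho-\bar\rho\|_p^2.
\]
So the task reduces to controlling $\|\rho-\bar\rho\|_p^2$ by $\int_\Omega h(\rho|\bar\rho)\,dx$, and this is where Lemma \ref{hlemma} enters.

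Next I would use the pointwise lower bounds from Lemma \ref{hlemma}: there is an $R\ge \bar M + 1$ so that $h(\rho|\bar\rho) \ge C_1|\rho-\bar\rho|^2$ on the set $\{\rho\le R\}$ and $h(\rho|\bar\rho)\ge C_2|\rho-\bar\rho|^\gamma$ on $\{\rho>R\}$. Split $\Omega = A \cup B$ with $A = \{\rho\le R\}$, $B=\{\rho>R\}$, and write $\|\rho-\bar\rho\|_p^p = \int_A |\rho-\bar\rho|^p + \int_B |\rho-\bar\rho|^p$. On $A$, since $1<p<2$ and $|\rho-\bar\rho|\le R+\bar M$ is bounded, we have $|\rho-\bar\rho|^p \le C|\rho-\bar\rho|^2 \le (C/C_1)\,h(\rho|\bar\rho)$, hence $\int_A|\rho-\bar\rho|^p \le C\int_\Omega h(\rho|\bar\rho)\,dx$ (using boundedness of $\Omega$ if one wants to absorb constants). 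On $B$, since $\gamma \ge \gamma_0 > p$ and on $B$ we have $|\rho-\bar\rho| > R - \bar M \ge 1$, so $|\rho-\bar\rho|^p \le |\rho-\bar\rho|^\gamma \le (1/C_2)\,h(\rho|\bar\rho)$, giving $\int_B |\rho-\bar\rho|^p \le C\int_\Omega h(\rho|\bar\rho)\,dx$. Combining, $\|\rho-\bar\rho\|_p^p \le C\int_\Omega h(\rho|\bar\rho)\,dx$.

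It remains to pass from a bound on $\|\rho-\bar\rho\|_p^p$ to one on $\|\rho-\bar\rho\|_p^2$. If $\int_\Omega h(\rho|\bar\rho)\,dx$ is small this is immediate since $p<2$ (so $x^{2/p}\le x$ for $x\le 1$), but for large values we need a different argument: one uses that for $t\ge 0$, $t^2 \le t^p + t^\gamma$ up to a constant when $p < 2 < \gamma$ (or $2\le\gamma$), or more directly splits the $L^p$ norm estimate into the two regions again and applies the quadratic bound on $A$ and, on $B$, notes $|\rho-\bar\rho|^2 \le |\rho-\bar\rho|^\gamma$ since $\gamma\ge 2$ is guaranteed precisely by $\gamma\ge 2-\tfrac{\alpha+d}{d} = 2 - \tfrac{\beta}{d}$ — wait, that only gives $\gamma\ge 2-\tfrac{\beta}{d}<2$. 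So the cleaner route is: estimate $\|\rho-\bar\rho\|_p^2$ directly by interpolating $p$ between $1$ and $\gamma_0$ as in the second inequality of \eqref{HLS}, $\|\rho-\bar\rho\|_p^2 \le C\|\rho-\bar\rho\|_1^{\beta/d}\|\rho-\bar\rho\|_{\gamma_0}^{\gamma_0}$; the $L^1$ factor is controlled by mass conservation and the bounds on $\bar\rho$ (so it is a harmless constant), while $\|\rho-\bar\rho\|_{\gamma_0}^{\gamma_0}$ is handled by the region splitting above with exponent $\gamma_0$ in place of $p$, using $\gamma\ge\gamma_0$ on $B$ and boundedness on $A$. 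This yields $\|\rho-\bar\rho\|_{\gamma_0}^{\gamma_0}\le C\int_\Omega h(\rho|\bar\rho)\,dx$ and hence the claim with $C_* = C$.

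The main obstacle is the second step of this last paragraph: getting the exponent on the right-hand side to be exactly $1$ (linear in $\int h(\rho|\bar\rho)$) rather than something like a power, which is what makes the inequality usable as a comparison functional. This is exactly why one invokes the interpolation form of HLS, $\|\rho-\bar\rho\|_p^2 \le C\|\rho-\bar\rho\|_1^{\beta/d}\|\rho-\bar\rho\|_{\gamma_0}^{\gamma_0}$: the $\|\cdot\|_1$ factor is a bona fide constant (from $\int\rho = M$, $\int\bar\rho = \bar M$, both finite), and the remaining $\|\rho-\bar\rho\|_{\gamma_0}^{\gamma_0}$ is genuinely linearly controlled by $\int h(\rho|\bar\rho)\,dx$ via Lemma \ref{hlemma} after the $\{\rho\le R\}/\{\rho>R\}$ decomposition, because the condition $\gamma\ge\gamma_0$ is precisely what lets the super-$R$ part be absorbed. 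I would also keep track of the constant $C_*$ here since Theorems \ref{thmwsu} and \ref{thmrelax} need "$\kappa$ sufficiently small" to mean $\kappa C_* < 2$ (so that the relative potential energy, and thus the relative total energy, is non-negative), which is the role this proposition plays downstream.
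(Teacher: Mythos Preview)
Your overall architecture matches the paper: apply HLS to reduce to $\|\rho-\bar\rho\|_p^2$ with $p=\tfrac{2d}{d+\beta}$, then split into $\{\rho\le R\}$ and $\{\rho>R\}$ and invoke Lemma~\ref{hlemma}. However, there is a genuine gap in how you treat the bounded region.

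On $A=\{\rho\le R\}$ you assert the pointwise inequality $|\rho-\bar\rho|^p \le C|\rho-\bar\rho|^2$ (and later the analogous $|\rho-\bar\rho|^{\gamma_0}\le C|\rho-\bar\rho|^2$), justified by ``boundedness''. This is in the wrong direction: since $p<\gamma_0<2$, for small $|\rho-\bar\rho|$ one has $|\rho-\bar\rho|^p \gg |\rho-\bar\rho|^2$. The resulting integral claim $\|\rho-\bar\rho\|_{\gamma_0}^{\gamma_0}\le C\int_\Omega h(\rho|\bar\rho)\,dx$ in your final paragraph is in fact \emph{false}: take $\rho=\bar\rho+\varepsilon g$ with bounded $g$; then $\int h(\rho|\bar\rho)\sim\varepsilon^2$ while $\|\rho-\bar\rho\|_{\gamma_0}^{\gamma_0}\sim\varepsilon^{\gamma_0}$, and $\varepsilon^{\gamma_0}/\varepsilon^2\to\infty$ as $\varepsilon\to 0$. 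Treating $\|\rho-\bar\rho\|_1^{\beta/d}$ as a harmless constant discards exactly the factor that would repair the scaling (note $\beta/d+\gamma_0=2$), so the global-interpolation route as written cannot close.

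The paper avoids this by splitting \emph{before} changing exponents: write
\[
\|\rho-\bar\rho\|_p^2 \le C\Big(\int_A|\rho-\bar\rho|^p\Big)^{2/p}+C\Big(\int_U|\rho-\bar\rho|^p\Big)^{2/p},
\]
and then, on $A$, use the embedding $L^2(\Omega)\subset L^p(\Omega)$ (H\"older on the bounded domain) to get $\big(\int_A|\rho-\bar\rho|^p\big)^{2/p}\le C\int_A|\rho-\bar\rho|^2$, which is the correct direction and is linear in $\int h(\rho|\bar\rho)$ by Lemma~\ref{hlemma}. On $U$ one applies the interpolation form of \eqref{HLS} (the $L^1$ factor bounded by $M+\bar M|\Omega|$) to reach $C\int_U|\rho-\bar\rho|^{\gamma_0}$, and there $|\rho-\bar\rho|\ge 1$ so $|\rho-\bar\rho|^{\gamma_0}\le|\rho-\bar\rho|^{\gamma}\le C\,h(\rho|\bar\rho)$. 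The key point you are missing is that the exponent $2/p>1$ must be applied to the $A$-integral \emph{before} comparing to the quadratic bound; doing it afterwards, or replacing $p$ by $\gamma_0$ globally, loses the inequality on the small-oscillation set.
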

\begin{proof}
Let $\beta = \alpha + d,$ $p = \tfrac{2d}{d+\beta},$ and $\gamma_0 = 2 - \tfrac{\beta}{d}.$ By the first inequality in Lemma \ref{HLSlemma} we have:
\[\Big| \int_\Omega (\rho - \bar \rho) \big( W * (\rho - \bar \rho) \big) \, dx \Big| \leq \tfrac{1}{d-\beta}\|(\rho - \bar \rho) I_\beta (\rho - \bar \rho) \|_1 \leq C \, \|\rho - \bar \rho \|_p^2. \]
Let $R$ be as in Lemma \ref{hlemma} and consider the sets $B = \{x \in \Omega \ | \ 0 \leq \rho(x) \leq R \}$, and its complement $U = \Omega \setminus B = \{x \in \Omega \ | \ \rho(x) > R \}$.
Then, the inclusion $L^2 \subseteq L^p$ and the second inequality in (\ref{HLS}) yield that:
\begin{equation*}
\begin{split}
\|\rho - \bar \rho \|_p^2 & \leq C \Big( \int_B |\rho - \bar \rho|^p \ dx  \Big)^{2/p} + C \Big( \int_U |\rho - \bar \rho|^p \, dx  \Big)^{2/p} \\
& \leq C \int_B |\rho - \bar \rho|^2 \ dx + C \int_U |\rho - \bar \rho|^{\gamma_0} \, dx.
\end{split}
\end{equation*}
By Lemma \ref{hlemma} it follows that 
\[ \int_B |\rho - \bar \rho|^2 \ dx \leq C \int_\Omega h(\rho | \bar \rho) \, dx.\]
Now, almost everywhere in $U$ it holds that $|\rho - \bar \rho| \geq 1$, hence, by Lemma \ref{hlemma} we obtain 
\[ \int_U |\rho - \bar \rho|^{\gamma_0} \ dx \leq \int_U |\rho - \bar \rho|^{\gamma} \ dx \leq C \int_\Omega h(\rho | \bar \rho) \, dx,\]
which completes the proof.
\end{proof} 
Under the same conditions of the previous result, we choose $\kappa$ so that 
$0 < \kappa < \frac{2}{C_*}$,
and set $\lambda \coloneqq 1 - \frac{\kappa C_*}{2} > 0$.
Thus, 
\[0 \leq \int_\Omega h(\rho | \bar \rho) \ dx \leq \frac{1}{\lambda} \int_\Omega h(\rho | \bar \rho) + \kappa \tfrac{1}{2}(\rho - \bar \rho) W * (\rho - \bar \rho) \, dx.  \]

\subsection{Proof of Theorem \ref{thmwsu}} \label{section_proofs_WSU}
The first step to establish the desired theorem is what is called the relative energy inequality. The calculations involved are by now standard and similar results can be found, for example, in \cite{lattanziogas, carrillorelativeentropy, alves}. Nevertheless, regarding the present case, we provide the details for completeness.  
\begin{proposition} \label{relineprop1}
Let $(\rho, u)$ be a dissipative weak solution of (\ref{EW}), and let $(\bar \rho, \bar u)$ be a strong solution of (\ref{EW}). Then, for each $t \in [0,T[,$ the relative energy $\Psi$ satisfies 
\begin{equation} \label{relativeinequality1}
\Psi(t) - \Psi(0) + \int_0^t \int_\Omega \nu \rho |u-\bar u|^2 \, dxdt \leq \mathcal{I}_1(t) + \mathcal{I}_2(t) + \mathcal{I}_3(t),
\end{equation}
where 
\begin{equation*}
\begin{split}
\mathcal{I}_1(t) & = - \int_0^t \int_\Omega \nabla \bar u : \rho (u - \bar u) \otimes (u - \bar u) \, dxd\tau, \\
\mathcal{I}_2(t) & = - \int_0^t \int_\Omega (\nabla \cdot \bar u) p(\rho | \bar \rho) \, dxd\tau, \\
\mathcal{I}_3(t) & = \int_0^t \int_\Omega \kappa (\rho - \bar \rho) \bar u \cdot \nabla W *(\rho - \bar \rho) \, dxd\tau.
\end{split}
\end{equation*}
\end{proposition}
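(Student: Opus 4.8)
The approach is the standard relative-energy calculus, organised around the observation that the test functions granted in the definition of a strong solution are exactly the ones dictated by the structure of $\Psi$. Writing $\mathcal{F}[\rho,u] = \int_\Omega \tfrac12\rho|u|^2 + h(\rho) + \kappa\tfrac12\rho(W*\rho)\,dx$ for the total energy and $\varphi_* := -\tfrac12|\bar u|^2 + h'(\bar\rho) + \kappa W*\bar\rho$, one first records, by expanding the squares and using the symmetry of $W$ as in Section~\ref{section_energy}, the identity
\begin{equation*}
\Psi(t) = \mathcal{F}[\rho,u](t) - \mathcal{F}[\bar\rho,\bar u](t) - \int_\Omega \varphi_*\,(\rho - \bar\rho) + \bar u\cdot(\rho u - \bar\rho\bar u)\,dx .
\end{equation*}
The point is that $\varphi = \theta\varphi_*$ and $\tilde\varphi = \theta\bar u$ are precisely the admissible test functions from the definition of strong solution, where $\theta$ is the time-cutoff family \eqref{theta} approximating $\mathbf{1}_{[0,t]}$.

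Each of the three pieces is then supplied with its own relation. For $\mathcal{F}[\rho,u]$ one invokes the dissipativity inequality \eqref{weakdissipEW1} and passes to the limit in \eqref{theta} --- legitimate thanks to the continuity in time of $\rho$, $\rho u$, $\rho|u|^2$ and $\rho(W*\rho)$ built into the notion of weak solution --- obtaining $\mathcal{F}[\rho,u](t) - \mathcal{F}[\rho,u](0) + \int_0^t\int_\Omega \nu\rho|u|^2 \le 0$. For $\mathcal{F}[\bar\rho,\bar u]$ the energy identity \eqref{strongenergyEW1} gives the exact counterpart, with equality and $\nu\bar\rho|\bar u|^2$ in place of $\nu\rho|u|^2$. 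The linear remainder is split into the part involving $(\rho,\rho u)$, which is evaluated by inserting $\varphi = \theta\varphi_*$ into the weak continuity equation and $\tilde\varphi = \theta\bar u$ into the weak momentum equation and again passing to the limit in \eqref{theta}, and the part involving only $(\bar\rho,\bar\rho\bar u)$, which is differentiated classically since $\bar\rho$ and $\bar u$ are Lipschitz; in both, $\partial_t\bar\rho$ is replaced by $-\nabla\cdot(\bar\rho\bar u)$ and $\partial_t\bar u$ by $-(\bar u\cdot\nabla)\bar u - \nabla h'(\bar\rho) - \kappa\nabla W*\bar\rho - \nu\bar u$ using \eqref{strongEW1}.

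What remains is to add the three contributions and collect terms. The friction contributions recombine into $-\int_0^t\int_\Omega \nu\rho|u-\bar u|^2$, which is moved to the left-hand side; the convective terms collapse to $\mathcal{I}_1$; and the pressure terms, using $h''(\bar\rho)\nabla\bar\rho = \nabla h'(\bar\rho)$ together with the definition of $p(\rho|\bar\rho)$, collapse to $\mathcal{I}_2$. The step demanding the most care is the nonlocal one: nonlocal factors enter through $\nabla\varphi_*$ and $\partial_t\varphi_*$ in the continuity equation, through $\partial_t\bar u$ (eliminated via \eqref{strongEW1}), and through $\kappa\rho\nabla W*\rho$ in the momentum equation, and these must be shown to telescope into exactly $\mathcal{I}_3 = \int_0^t\int_\Omega \kappa(\rho-\bar\rho)\bar u\cdot\nabla W*(\rho-\bar\rho)$. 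Since the Poisson-type integration-by-parts identities available for Euler--Poisson systems do not apply here, this is carried out directly, using the symmetry of $W$, the representation $\nabla(W*\rho) = \nabla W*\rho$ and the integrability $W*\rho \in C([0,T[;L^r(\Omega))$ furnished by the proposition above, together with Fubini's theorem to justify each interchange; the boundary terms, in the bounded-domain case, vanish by the no-flux conditions on $\bar u$ and on $\tilde\varphi$.
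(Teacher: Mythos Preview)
Your proposal is correct and follows essentially the same approach as the paper: both arguments test the weak continuity and momentum equations against $\theta\varphi_*$ and $\theta\bar u$, combine with the weak dissipation inequality and the strong energy identity, and then simplify using \eqref{strongEW1} (for the convective and friction pieces), the continuity equation for $\bar\rho$ (for the pressure piece), and the symmetry of $W$ (for the nonlocal piece). The only organisational difference is that the paper applies the weak formulation to the differences $(\rho-\bar\rho,\,\rho u-\bar\rho\bar u)$ directly, while you split the linear remainder into a weak part in $(\rho,\rho u)$ and a classical part in $(\bar\rho,\bar\rho\bar u)$; these are equivalent.
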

\begin{proof}
Fix $t \in [0,T[,$  and let $\theta : [0,T[ \to \mathbb{R}$ be defined by
\begin{equation} \label{theta}
\theta(\tau) = 
\begin{cases}
1 &\text{if} \ 0 \leq \tau < t, \\
\dfrac{t-\tau}{k} + 1 &\text{if} \ t \leq \tau < t + k, \\
0 &\text{if} \ t+k \leq \tau < T,
\end{cases} 
\end{equation}
where $k>0$ is such that $t + k < T.$ Using this function $\theta$ in (\ref{weakdissipEW1}), after letting $\kappa \to 0$ we obtain:
\begin{equation} \label{weakenergy}
\int_\Omega \tfrac{1}{2}\rho|u|^2+h(\rho)+ \kappa \tfrac{1}{2} \rho (W* \rho)  \, dx \Big|_{\tau=0}^{\tau=t} \leq -\int_0^t \int_\Omega \nu \rho |u|^2 \, dxd\tau.
\end{equation}
Regarding the strong solution, integrating (\ref{strongenergyEW1}) over $]0,t[$ results in:
\begin{equation} \label{strongenergy}
\int_\Omega \tfrac{1}{2}\bar \rho|\bar u|^2+h(\bar \rho)+\kappa \tfrac{1}{2} \bar \rho (W * \bar \rho)  \, dx \Big|_{\tau=0}^{\tau=t} = -\int_0^t \int_\Omega \nu \bar \rho |\bar u|^2 \, dxd\tau.
\end{equation}
Next, we consider the weak formulation applied to the difference $(\rho - \bar \rho, \rho u - \bar \rho \bar u)$ between the weak and strong solutions. The weak formulation reads:
\begin{equation*}
         -\int_0^T \int_{\Omega} (\partial_t \varphi) (\rho - \bar{\rho}) \ dxdt-\int_0^T \int_{\Omega} \nabla \varphi \cdot (\rho u - \bar{\rho} \bar{u}) \ dxdt - \int_{\Omega} \varphi (\rho - \bar{\rho}) \big|_{t=0} \, dx=0,
\end{equation*}
\begin{align*}
        &-  \int_0^T \int_{\Omega} \partial_t \tilde{\varphi} \cdot (\rho u- \bar{\rho} \bar{u}) \ dxdt -  \int_0^T \int_{\Omega} \nabla \tilde{\varphi} : (\rho u \otimes u - \bar{\rho} \bar{u} \otimes \bar{u}) \, dx dt \\
        &-  \int_0^T \int_{\Omega} (\nabla \cdot \tilde{\varphi}) \big(p(\rho)-p(\bar{\rho}) \big) \, dxdt
        - \int_{\Omega} \tilde{\varphi} \cdot (\rho u - \bar{\rho} \bar{u})\big|_{t=0} \, dx \\
        =& \  -  \int_0^T \int_{\Omega} \tilde{\varphi} \cdot (\kappa \rho \nabla W*\rho - \kappa \bar{\rho} \nabla W*\bar \rho) \, dxdt - \int_0^T \int_{\Omega} \nu \tilde{\varphi} \cdot (\rho u - \bar \rho \bar u) \, dxdt,
\end{align*} 
for any Lipschitz test functions $\varphi: [0,T[ \times \Omega \to \mathbb{R}, \ \tilde{\varphi}:[0,T[ \times \Omega \to \mathbb{R}^d$ that have compact support in time, and satisfy $\tilde{\varphi} \cdot n = 0$ on $\mathopen{[}0,T\mathclose{[} \times \partial \Omega$.  

Consider as tests functions above the functions given by $\varphi = \theta \big(-\tfrac{1}{2}|\bar u|^2 + h^\prime(\bar \rho) + \kappa W* \bar \rho \big) $ and $\tilde{\varphi} = \theta \bar u$, 
where $\theta$ is the same as in (\ref{theta}). Letting $k \to 0$ results in:
\begin{align} \label{weakdiff1}
      & \int_\Omega  \big(- \tfrac{1}{2}|\bar{u}|^2+h^\prime(\bar{\rho})+ \kappa W * \bar \rho \big)(\rho - \bar{\rho})  \, dx \Big|_{\tau = 0}^{\tau = t} \nonumber \\
       & -\int_0^t \int_\Omega \partial_\tau \big(-  \tfrac{1}{2}|\bar{u}|^2+ h^\prime(\bar{\rho})+ \kappa W * \bar \rho \big)(\rho - \bar{\rho}) \, dxd\tau \\
     & -  \int_0^t \int_\Omega \nabla \big(- \tfrac{1}{2}|\bar{u}|^2+h^\prime(\bar{\rho})+ \kappa W * \bar \rho \big) \cdot (\rho u - \bar{\rho} \bar{u}) \, dxd\tau =  0,\nonumber
\end{align}
\begin{align} \label{weakdiff2}
        & \int_\Omega  \bar{u} \cdot (\rho u - \bar{\rho} \bar{u}) \, dx \Big|_{\tau = 0}^{\tau = t} -  \int_0^t \int_\Omega (\partial_\tau \bar{u}) \cdot (\rho u - \bar{\rho} \bar{u}) \, dxd\tau \nonumber\\
       & -   \int_0^t \int_\Omega \nabla \bar{u} : (\rho u \otimes u - \bar{\rho} \bar{u} \otimes \bar{u})\ dxd\tau- \int_0^t \int_\Omega (\nabla \cdot \bar{u})\big(p(\rho)-p(\bar{\rho})\big) \, dxd\tau \\
        = & - \int_0^t \int_\Omega \bar{u} \cdot (\kappa \rho \nabla W * \rho - \kappa \bar{\rho} \nabla W * \bar \rho) \, dxd\tau- \int_0^t \int_{\Omega} \nu \bar u \cdot (\rho u - \bar \rho \bar u) \, dxd\tau.\nonumber
\end{align}
Now, we collect $((\ref{weakenergy}) - (\ref{strongenergy} - (\ref{weakdiff1}) - (\ref{weakdiff2}))$ to obtain
\begin{equation} \label{RE1}
 \begin{split}
 \int_\Omega &  \tfrac{1}{2} \rho |u - \bar{u}|^2 + h(\rho | \bar{\rho}) +\kappa \tfrac{1}{2}(\rho - \bar \rho) \big(W * (\rho - \bar \rho) \big) \, dx\Big|_{\tau = 0}^{\tau = t} \\
 \leq & - \int_0^t \int_\Omega \nu \rho |u|^2 - \nu \bar \rho |\bar u|^2 - \nu \bar u \cdot (\rho u -\bar \rho \bar u) \, dxd\tau \\
 & - \int_0^t \int_\Omega \partial_\tau (- \tfrac{1}{2}|\bar{u}|^2 + h^\prime(\bar{\rho}) + \kappa W * \bar \rho )(\rho - \bar{\rho}) \, dxd\tau \\
& - \int_0^t \int_\Omega \nabla (- \tfrac{1}{2}|\bar{u}|^2+h^\prime(\bar{\rho}) +\kappa W * \bar \rho ) \cdot (\rho u - \bar{\rho} \bar{u}) \, dxd\tau \\
& - \int_0^t \int_\Omega (\partial_\tau \bar{u}) \cdot (\rho u - \bar{\rho} \bar{u})dxd\tau -  \int_0^t \int_\Omega \nabla \bar{u} : (\rho u \otimes u - \bar{\rho} \bar{u} \otimes \bar{u}) \, dxd\tau \\
& - \int_0^t \int_\Omega (\nabla \cdot \bar{u})\big(p(\rho)-p(\bar{\rho})\big) \ dxd\tau +  \int_0^t \int_\Omega \bar{u} \cdot (\kappa\rho \nabla W * \rho - \kappa \bar{\rho} \nabla W * \bar \rho) \, dxd\tau.
  \end{split}
\end{equation} 
The left-hand side of the inequality above was obtained using the symmetry of $W$. 
Recall that the strong solution $(\bar \rho, \bar u)$ satisfies 
$
  \partial_t \bar{u}+\bar{u} \cdot \nabla \bar{u}  = -\nabla \big(h^\prime(\bar{\rho})+\kappa W * \bar \rho \big) - \nu \bar u.
$
Multiplying the above expression by $\rho (u - \bar{u})$ gives:
\begin{align} \label{RE2}
     &\partial_t \big(- \tfrac{1}{2} |\bar{u}|^2\big)(\rho - \bar{\rho})+ \partial_t \bar{u} \cdot (\rho u - \bar{\rho} \bar{u}) +  \nabla \big(- \tfrac{1}{2} |\bar{u}|^2\big) \cdot (\rho u - \bar{\rho} \bar{u}) + \nabla \bar{u} : (\rho u \otimes u-\bar{\rho} \bar{u} \otimes \bar{u}) \nonumber\\
   = &  -\rho\nabla h_1^\prime(\bar{\rho}) \cdot (u-\bar{u})-\rho \nabla \bar{\phi} \cdot (u-\bar{u}) +  \nabla \bar{u} : \rho (u - \bar{u}) \otimes (u - \bar{u}) - \nu \rho \bar u \cdot (u - \bar u).
\end{align}
Substituting (\ref{RE2}) into (\ref{RE1}) results in:
\begin{align} \label{RE3}
 \int_\Omega &   \tfrac{1}{2} \rho |u - \bar{u}|^2 + h(\rho | \bar{\rho}) +\kappa \tfrac{1}{2}(\rho - \bar \rho) \big(W * (\rho - \bar \rho) \big) \, dx\Big|_{\tau = 0}^{\tau = t} \nonumber\\
 \leq 
& - \int_0^t \int_\Omega \nu \rho |u|^2 - \nu \bar \rho |\bar u|^2 - \nu \bar u \cdot (\rho u -\bar \rho \bar u) - \nu \rho \bar u \cdot (u - \bar u) \, dxd\tau \nonumber\\
 & - \int_0^t \int_\Omega \partial_\tau (h^\prime(\bar{\rho}) + \kappa W * \bar \rho )(\rho - \bar{\rho}) \ dxd\tau  - \int_0^t \int_\Omega \nabla (h^\prime(\bar{\rho}) +\kappa W * \bar \rho ) \cdot (\rho u - \bar{\rho} \bar{u}) \, dxd\tau \nonumber\\
& - \int_0^t \int_\Omega (\nabla \cdot \bar{u})\big(p(\rho)-p(\bar{\rho})\big) \ dxd\tau +  \int_0^t \int_\Omega \bar{u} \cdot (\kappa \rho \nabla W * \rho - \kappa \bar{\rho} \nabla W * \bar \rho) \, dxd\tau \\
& + \int_0^t \int_\Omega \rho \nabla h^\prime(\bar \rho) \cdot (u - \bar u) \ dxd\tau +  \int_0^t \int_\Omega \kappa \rho(u - \bar u) \cdot \nabla W * \bar \rho   \, dxd\tau \nonumber\\
& - \int_0^t \int_\Omega \nabla \bar u : \rho (u - \bar u) \otimes (u - \bar u) \, dx d\tau.\nonumber
\end{align} 
Using the continuity equation satisfied by the strong solution, we derive:
\begin{equation} \label{RE4}
\begin{split}
    \partial_t \big(h^\prime(\bar{\rho}) \big)(\rho - \bar{\rho}) & + \nabla h^\prime(\bar{\rho}) \cdot (\rho u -\bar{\rho} \bar{u})  +   (\nabla \cdot \bar{u}) \big(p(\rho) - p(\bar{\rho}) \big) - \nabla h^\prime(\bar{\rho}) \cdot (\rho u -\rho \bar{u}) \\
    & =  (\nabla \cdot \bar{u}) p(\rho | \bar{\rho}).
    \end{split}    
\end{equation} 
Moreover, using the symmetry of $W$ we get:
\begin{equation} \label{RE5}
 \begin{split}
&- \int_\Omega \partial_t (\kappa W * \bar \rho)(\rho - \bar \rho) \, dx - \int_\Omega \nabla (\kappa W * \bar \rho) \cdot (\rho u - \bar \rho \bar u) \, dx \\ 
& + \int_\Omega \bar u \cdot (\kappa \rho \nabla W * \rho - \kappa \bar \rho \nabla W * \bar \rho) \, dx + \int_\Omega \kappa \rho(u -\bar u) \cdot \nabla W * \bar \rho \, dx \\
& =  \int_\Omega \kappa (\rho - \bar \rho) \bar u \cdot \nabla W *(\rho - \bar \rho) \, dx.
 \end{split}
\end{equation}
Replacing (\ref{RE4}) and (\ref{RE5}) in (\ref{RE3}) yields the desired result.
\end{proof}
The next step is to bound the terms $\mathcal{I}_i$ from above using the relative energy $\Psi.$ For $\mathcal{I}_1$ and $\mathcal{I}_2$ we have:
\begin{equation*}
  \begin{split}
    \mathcal{I}_1(t) &=  - \int_0^t \int_{\Omega} \nabla \bar{u}: \rho (u - \bar{u}) \otimes (u - \bar{u}) \, dxd \tau \\
   & \leq C \ \|\nabla \bar{u} \|_{\infty} \int_0^t \int_{\Omega}  \rho |u - \bar{u}|^2 \, dxd\tau \leq C \int_0^t \Psi(\tau) \, d\tau,
  \end{split}
\end{equation*}
\begin{equation*}
  \begin{split}
    \mathcal{I}_2(t) &=  - \int_0^t \int_{\Omega} (\nabla \cdot \bar{u}) p(\rho | \bar{\rho}) \, dxd \tau \\
   & \leq  \|\nabla \cdot \bar{u} \|_{\infty} (\gamma - 1) \int_0^t \int_{\Omega} h(\rho | \bar{\rho}) \, dxd\tau \leq C \int_0^t \Psi(\tau) \, d\tau.
  \end{split}
\end{equation*}

Regarding $\mathcal{I}_3$, we first observe
\begin{align*}
\mathcal{I}_3(t) &= \int_0^t \iint_{\Omega \times \Omega} \kappa \bar u(x) \cdot (\rho - \bar\rho)(x) \nabla W(x-y) (\rho - \bar\rho)(y)\,dxdyd\tau\cr
&=\frac12\int_0^t \iint_{\Omega \times \Omega} \kappa  (\bar u(x) - \bar u(y))\cdot (\rho - \bar\rho)(x) \nabla W(x-y) (\rho - \bar\rho)(y)\,dxdyd\tau
\end{align*}
due to $\nabla W(-x)= -\nabla W(x)$ for $x \in \Omega$. Since 
\[
|\bar u(x) - \bar u(y)||\nabla W(x-y)| \leq C\|\nabla \bar u\|_\infty |x-y|^\alpha,
\]
we proceed as in the proof of Proposition \ref{Cstar} with $\beta = \alpha + d$
obtaining:
\begin{equation*}
\begin{split}
\mathcal{I}_3(t) & \leq C \int_0^t \|(\rho - \bar \rho) I_{\beta} (\rho - \bar \rho) \|_1 \,  d \tau \\
& \leq C \int_0^t \int_\Omega h(\rho | \bar \rho) \, dx d \tau \leq C \int_0^t \Psi(\tau) \, d\tau.
\end{split}
\end{equation*}

In light of the previous bounds, expanding (\ref{relativeinequality1}) further yields
\[ 
\Psi(t) - \Psi(0) + \int_0^t \int_\Omega \nu \rho |u-\bar u|^2 \, dxd\tau \leq C \int_0^t \Psi(\tau) \, d\tau, \quad t \in [0,T[,
\]
from which (\ref{stability1}) follows by Gronwall's lemma. 
\subsection{Proof of Theorem \ref{thmrelax}}\label{section_proofs_RL}
The proof of the next result is analogous to the proof of Proposition \ref{relineprop1}, and is therefore omitted. The only difference to the error term $\bar e$ that is present in this case.
\begin{proposition}
Let $(\rho, u)$ be a dissipative weak solution of (\ref{EW2}), and let $\bar \rho$ be a strong solution of (\ref{GF}). Then, for each $t \in [0,T[,$ the relative energy $\Phi$ satisfies 
\begin{equation*} 
\Phi_\varepsilon(t) - \Phi_\varepsilon(0) + \int_0^t \int_\Omega \rho|u - \bar u|^2 \ dxd\tau \leq \mathcal{J}_1(t) + \mathcal{J}_2(t) + \mathcal{J}_3(t) + \mathcal{J}_4(t),
\end{equation*}
where 
\begin{equation*}
\begin{split}
\mathcal{J}_1(t) & = - \int_0^t \int_\Omega \varepsilon \nabla \bar u : \rho (u - \bar u) \otimes (u - \bar u) \, dxd\tau, \\
\mathcal{J}_2(t) & = - \int_0^t \int_\Omega (\nabla \cdot \bar u) p(\rho | \bar \rho) \, dxd\tau, \\
\mathcal{J}_3(t) & = \int_0^t \int_\Omega \kappa (\rho - \bar \rho) \bar u \cdot \nabla W *(\rho - \bar \rho) \, dxd\tau, \\
\mathcal{J}_4(t) & = - \int_t \int_\Omega \varepsilon \frac{\rho}{\bar \rho} \bar e \cdot (u - \bar u) \, dx d \tau.
\end{split}
\end{equation*}
\end{proposition}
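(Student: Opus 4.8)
The plan is to repeat, with the obvious modifications, the argument of Proposition~\ref{relineprop1}, now comparing a dissipative weak solution $(\rho,u)$ of \eqref{EW2} with the approximate solution $(\bar\rho,\bar u)$ built from the strong solution $\bar\rho$ of \eqref{GF}, where $\bar u=-\nabla h'(\bar\rho)-\kappa\nabla W*\bar\rho$ and $\bar e=\partial_t(\bar\rho\bar u)+\nabla\cdot(\bar\rho\bar u\otimes\bar u)$. Fix $t\in[0,T[$ and let $\theta$ be the cutoff from \eqref{theta}. I would first assemble three ingredients: (i) the dissipative weak energy inequality for \eqref{EW2}, which after inserting $\theta$ and letting $k\to 0$ reads
\[
\int_\Omega\Big(\tfrac{\varepsilon}{2}\rho|u|^2+h(\rho)+\tfrac{\kappa}{2}\rho(W*\rho)\Big)\,dx\Big|_{\tau=0}^{\tau=t}\le-\int_0^t\int_\Omega\rho|u|^2\,dxd\tau ;
\]
(ii) the energy identity of the approximate solution, obtained by multiplying $\bar u=-\nabla h'(\bar\rho)-\kappa\nabla W*\bar\rho$ by $\bar\rho\bar u$ and using \eqref{strongGF},
\[
\int_\Omega\Big(h(\bar\rho)+\tfrac{\kappa}{2}\bar\rho(W*\bar\rho)\Big)\,dx\Big|_{\tau=0}^{\tau=t}=-\int_0^t\int_\Omega\bar\rho|\bar u|^2\,dxd\tau ;
\]
and (iii) the weak formulation of \eqref{EW2} applied to the difference $(\rho-\bar\rho,\rho u-\bar\rho\bar u)$, tested against $\varphi=\theta(-\tfrac12|\bar u|^2+h'(\bar\rho)+\kappa W*\bar\rho)$ and $\tilde\varphi=\theta\bar u$, which are exactly the admissible test functions in the definition of a strong solution of \eqref{GF}. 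The essential new feature is that in (iii) the right-hand side carries the extra contribution $\varepsilon\int_0^t\int_\Omega\tilde\varphi\cdot(\rho/\bar\rho)\,\bar e\,dxd\tau$, because $(\bar\rho,\bar u)$ solves only the \emph{approximate} version of \eqref{EW2}.

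Forming the combination (i)$-$(ii)$-$(iii$_1$)$-$(iii$_2$) and using the symmetry of $W$ produces $\Phi_\varepsilon(\tau)|_{\tau=0}^{\tau=t}$ on the left. On the right I would then proceed exactly as in the derivation of \eqref{RE2}--\eqref{RE5}: multiply the momentum relation for $\bar u$ obtained from the approximate system, namely
\[
\varepsilon\big(\partial_t\bar u+\bar u\cdot\nabla\bar u\big)=-\nabla\big(h'(\bar\rho)+\kappa W*\bar\rho\big)-\bar u+\tfrac{\varepsilon}{\bar\rho}\,\bar e ,
\]
by $\rho(u-\bar u)$ — the analogue of \eqref{RE2}, now producing the additional term $\varepsilon(\rho/\bar\rho)\bar e\cdot(u-\bar u)$; use the continuity equation for $\bar\rho$ to rewrite the pressure/potential contributions as $(\nabla\cdot\bar u)p(\rho|\bar\rho)$, the analogue of \eqref{RE4}; and use the symmetry of $W$ to collect the interaction contributions into $\kappa(\rho-\bar\rho)\bar u\cdot\nabla W*(\rho-\bar\rho)$, the analogue of \eqref{RE5}. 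The relaxation terms from (i)--(iii) collapse through the pointwise identity
\[
-\rho|u|^2+\bar\rho|\bar u|^2+\bar u\cdot(\rho u-\bar\rho\bar u)+\rho\bar u\cdot(u-\bar u)=-\rho|u-\bar u|^2 ,
\]
which accounts for the $\int_0^t\int_\Omega\rho|u-\bar u|^2$ term moved to the left-hand side. What remains on the right is precisely $\mathcal{J}_1+\mathcal{J}_2+\mathcal{J}_3+\mathcal{J}_4$, with $\mathcal{J}_1=-\int_0^t\int_\Omega\varepsilon\nabla\bar u:\rho(u-\bar u)\otimes(u-\bar u)$ coming from the convective term and $\mathcal{J}_4=-\int_0^t\int_\Omega\varepsilon(\rho/\bar\rho)\bar e\cdot(u-\bar u)$ being the error term just identified.

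The main obstacle, relative to Proposition~\ref{relineprop1}, is bookkeeping rather than analysis: one must carry the factors of $\varepsilon$ consistently through every term — the kinetic and convective contributions carry an $\varepsilon$, the pressure and interaction contributions do not — and verify that the non-exact part of the approximate system contributes exactly $\mathcal{J}_4$ and nothing else. One also has to check that $\varphi$ and $\tilde\varphi$ are genuine Lipschitz test functions in $(t,x)$; this is where the additional boundedness of $\partial^2\bar\rho/\partial x_i\partial x_j$, $\partial^2\bar\rho/\partial x_i\partial t$, $\partial^2(W*\bar\rho)/\partial x_i\partial x_j$ and $\partial^2(W*\bar\rho)/\partial x_i\partial t$ in the definition of a strong solution of \eqref{GF} is used, since these guarantee $\bar u$, $\nabla\bar u$, $\partial_t\bar u$ and $\bar e$ all belong to $L^\infty$. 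No Hardy--Littlewood--Sobolev estimate is needed at this stage; it enters only afterwards, when the $\mathcal{J}_i$ are bounded by $\Phi_\varepsilon$, together with $\mathcal{J}_4\le C\int_0^t\Phi_\varepsilon\,d\tau+C\varepsilon^2$ by Young's inequality, which is what eventually yields \eqref{stability2} via Gronwall's lemma.
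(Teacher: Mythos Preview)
Your proposal is correct and coincides with the paper's own treatment: the paper omits the proof and simply declares it analogous to that of Proposition~\ref{relineprop1}, the only new feature being the error term $\bar e$ from the approximate system, which is exactly what produces $\mathcal{J}_4$. One minor bookkeeping slip: in your item~(iii) the extra contribution from testing the approximate momentum equation with $\tilde\varphi=\theta\bar u$ is $\varepsilon\int_0^t\int_\Omega\bar u\cdot\bar e$ (no $\rho/\bar\rho$ factor); the factor $\rho/\bar\rho$ enters precisely where you later place it, in the analogue of \eqref{RE2} when the $\bar u$-equation is multiplied by $\rho(u-\bar u)$.
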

Similarly as above, we obtain the following bounds: 
\[\mathcal{J}_i(t) \leq C \int_0^t \Phi_\varepsilon(\tau) \, d \tau,  \quad i=1,2,3. \]
The term $\mathcal{J}_4$ is treated as follows,
\begin{equation*}
\begin{split}
\mathcal{J}_4(t) & = - \int_0^t \int_\Omega \varepsilon\frac{\rho}{\bar{\rho}} \bar{e} \cdot (u - \bar{u}) \ dxd\tau \leq \int_0^t \int_\Omega \tfrac{1}{2} \rho |u - \bar u|^2 \ dx d\tau +  \int_0^t \int_\Omega \frac{\varepsilon^2}{2} \rho \Big| \frac{\bar e}{\bar \rho} \Big|^2 \ dxd\tau\\ 
& \leq \int_0^t \int_\Omega \tfrac{1}{2} \rho |u - \bar u|^2 \ dx d\tau + C \varepsilon^2 t.
\end{split}
\end{equation*}
Thus,
\[ 
\Phi_\varepsilon(t) - \Phi_\varepsilon(0) + \int_0^t \int_\Omega \tfrac{1}{2} \rho |u - \bar u|^2 \ dx d\tau \leq C \int_0^t \Phi_\varepsilon(\tau) \, d\tau + C\varepsilon^2 t, \quad t \in [0,T[,
\]
from which (\ref{stability2}) follows by Gronwall's lemma.
\section*{Acknowledgments}
NJA acknowledges partial financial support from the Austrian Science Fund (FWF) project  10.55776/F65. JAC was supported by the Advanced Grant Nonlocal-CPD (Nonlocal PDEs for Complex Particle Dynamics: Phase Transitions, Patterns and Synchronization) of the European Research Council Executive Agency (ERC) under the European Union's Horizon 2020 research and innovation programme (grant agreement No. 883363). JAC was also partially supported by the Engineering and Physical Sciences Research Council (EPSRC) under grants EP/T022132/1 and EP/V051121/1. YPC was supported by NRF grant (No. 2022R1A2C1002820). This work originated during a visit of JAC to the King Abdullah University of Science and Technology, where NJA was a PhD student.

\end{document}